\newtheorem{theorem}{Theorem}[section]
\newtheorem{lemma}{Lemma}[section]
\newtheorem{definition}{Definition}[section]
\newtheorem{proposition}{Proposition}[section]
\crefname{definition}{Def.}{Defs.}
\newcommand{\defcal} [1]{\expandafter\newcommand\csname cal#1\endcsname{{\cal #1}}}
\newcommand{\defsf} [1]{\expandafter\newcommand\csname sf#1\endcsname{{\sf #1}}}
\newcommand{\defbf} [1]{\expandafter\newcommand\csname bf#1\endcsname{{\bf #1}}}
\newcommand{\defbb} [1]{\expandafter\newcommand\csname bb#1\endcsname{{\mathbb{#1}}}}
\newcommand{\deffrak} [1]{\expandafter\newcommand\csname frak#1\endcsname{{\mathfrak{#1}}}}
\newcounter{ct}
    \edef\letter{\Alph{ct}}
    \edef\letter{\alph{ct}}
\newcommand{\rmd}{\mathrm{d}}
\newcommand{\rmI}{\mathrm{I}}
\newcommand{\op}{\mathrm{op}}
\newcommand{\numberthis}{\addtocounter{equation}{1}\tag{\theequation}}
\newcommand{\mirrorfunc}[1]{
    \ensuremath{\if$#1$\phi \else \phi(#1)\fi}
}
\newcommand{\mirrorfuncdual}[1]{
    \ensuremath{\if$#1$\phi^{*}\else \phi^{*}(#1)\fi}
}
\newcommand{\defeq}{\overset{\mathrm{def}}{=}}
\newcommand{\subscript}[2]{$#1 _ #2$}
\newlist{assumplist}{enumerate}{1}
\setlist[assumplist]{label=(\subscript{\textbf{A}}{{\arabic*}})}
\newcommand{\potential}[1]{
    \ensuremath{\if$#1$V\else V(#1)\fi}
}
\newcommand*{\algotitle}[2]{%
  \stepcounter{algocf}%
  \hypertarget{algocf.title.\theHalgocf}{}%
  \NR@gettitle{#1}%
  \label{#2}%
  \addtocounter{algocf}{-1}%
}
\newlist{condlist}{enumerate}{1}
\setlist[condlist]{label=\subscript{\textbf{\textsf{C}}}{\textsf{{\arabic*}}},leftmargin=*, itemsep=0pt}
\newtheorem{assumption}{Assumption}[section]
\newcommand{\FuncIneq}{\mathsf{FuncIneq}}
\newcommand{\PI}[1]{\mathsf{PI}\left(#1\right)}
\newcommand{\LSI}[1]{\mathsf{LSI}\left(#1\right)}
\newcommand{\PhiSI}[1]{\Phi\mathsf{SI}\left(#1\right)}
\newcommand{\Phient}[3]{
    \ensuremath{\if$#3$\bbJ_{#1}^{#2}\else\bbJ_{#1}^{#2}\left[#3\right]\fi}}
\newcommand{\cssf}[1]{\calC_{c}^{\infty}(#1)}
\newcommand{\var}{\mathsf{var}}
\newcommand{\ent}{\mathsf{ent}}
\def\@maketitle{%
  \newpage
  \begin{center}%
  \let \footnote \thanks
    {\Large \bf \@title \par}%
  \end{center}%
  \par
  \vskip 0.5em}
\title{Two-scale criteria for Poincar\'{e} and log-Sobolev inequalities\\with applications to Markov chain Monte Carlo}
\begin{document}
\maketitle

\begin{center}
{\large
\begin{tabular}{c}
    \makecell{Vishwak Srinivasan\(^\dagger\)\\{\normalsize\texttt{vishwaks@mit.edu}}}
\end{tabular}
\vskip 0.5em

\normalsize
\begin{tabular}{c}
\({}^{\dagger}\)Department of Electrical Engineering and Computer Science, MIT
\end{tabular}
}
\end{center}

\begin{abstract}
    Given a collection of distributions \(\{P_{y}\}\) and a mixing distribution \(\rho\) supported over \(\bbR^{d}\), we propose new sufficient conditions under which the mixture / joint distribution satisfies a Poincar\'{e} or log-Sobolev inequality.
    We develop these sufficient conditions in a unified manner using the framework of \(\Phi\)-Sobolev inequalities \citep{chafai2004entropies}.
    The conditions that we develop in this work are satisfied by a variety of Markov chains, and consequently allows us to characterise the evolution of these functional inequalities for iterates generated by simulating these Markov chains.
    As a result, we obtain an clean error analysis for estimating a broad class of functionals using Markov chain Monte Carlo strategies along these Markov chains.
\end{abstract}

\section{Introduction}

At the core of modern statistical inference and estimation systems underlies the fundamental algorithmic task of \emph{sampling}, wherein the goal is to draw samples from a given distribution \(\pi^{\star}\).
One of the most essential downstream purpose of these samples is in estimating functionals \(F^{\star}(f, \pi^{\star}) := \bbE_{x \sim \pi^{\star}}[f(x)]\) of the distribution when exact computation of \(F^{\star}\) is intractable.
Markov chain Monte Carlo (MCMC) has proven to be an effective strategy for this estimation task over the past half-century \citep{brooks2011handbook}, especially in high-dimensional settings.
To apply MCMC, one needs to design a Markov chain \(\bfP\) and simulate it.
The Markov chain defines how the next iterate \(x'\) is generated from the current iterate \(x\), and the Markov chain is developed such that after a sufficiently long simulation, we can obtain a sample that approximately resembles one from \(\pi^{\star}\) in distribution.
Multiple such samples can be collected and aggregated to therefore obtain an estimate of \(F^{\star}\); more precisely, with \(N\) such samples \(\calS_{N} := \{x^{(i)}\}_{i=1}^{N}\), one has the empirical average
\begin{equation*}
    \widehat{F}(f, \calS_{N}) := \frac{1}{N}\sum_{x \in \calS_{N}} f(x)
\end{equation*}
to approximate \(F^{\star}\).
Naturally, the quality of the estimate has to be understood to make recommendations for an appropriate \(N\) and how \(\calS_{N}\) is collected.

We assume that there exists a stationary distribution \(\rho^{\star}\) for \(\bfP\) that need not coincide with \(\pi^{\star}\) due to a potentially irreducible bias (this is also referred to as a \emph{biased limit}).
The error between \(\widehat{F}(f, \calS_{N})\) and \(F^{\star}(f, \pi^{\star})\) can be decomposed into three key terms as shown below.
\begin{align*}
    \mathsf{Err}(\calS_{N}, f, \pi^{\star}) &:= \widehat{F}(f, \calS_{N}) - F^{\star}(f, \pi^{\star}) \\
    &= \underbrace{\widehat{F}(f, \calS_{N}) - \bbE[\widehat{F}(f, \calS_{N})]}_{\text{Concentration}} + \underbrace{\bbE[\widehat{F}(f, \calS_{N})] - F^{\star}(f, \rho^{\star})}_{\text{Convergence}} + \underbrace{F^{\star}(f, \rho^{\star}) - F^{\star}(f, \pi^{\star})}_{\text{Bias}}~.\numberthis\label{eq:error-decomp}
\end{align*}
Above, the expectation is taken w.r.t. to the randomness in the simulation procedure.

In this work, focusing on Markov chains defined over the \(d\)-dimensional Euclidean space \(\bbR^{d}\), we propose general and verifiable criteria for \(\bfP\), which when satisfied in conjunction with a \emph{functional inequality} such as the Poincar\'{e} or log-Sobolev inequality, provides a systematic way of understanding each of the contributing factors to the error above.
Functional inequalities such as the Poincar\'{e} and log-Sobolev inequalities play a central role in understanding each of these quantities, since they are essential in quantitatively characterising the concentration of measure phenomemon \citep{ledoux2001concentration} and the behaviours of Markov processes \citep{bakry2014analysis}, and play a direct role in understanding each of the three terms above.
This motivates identifying cases in which such inequalities hold, and characterising such cases through verifiable criteria.

\subsection{Our contributions to two-scale criteria}
\label{sec:intro-two-scale}

Let \(\rho\) be a probability measure over \(\bbR^{d}\) that satisfies a functional inequality \(\FuncIneq{}\), and let \(\bfP = \{P_{X | Y = y}\}_{y \in \bbR^{d}}\) be a collection of probability measures each of which are supported on \(\calX\) and satisfy \(\FuncIneq{}\) as well.
We pose the following two questions.
\begin{itemize}
    \item [\textbf{Q1}] When does the joint distribution \(\nu\) with density \(\rmd\nu(x, y) = \rmd P_{X | Y = y}(x) \cdot \rmd\rho(y)\) over \(\calX \times \bbR^{d}\) satisfy \(\FuncIneq{}\)?
    \item [\textbf{Q2}] When does the mixture distribution \(\mu\) with density \(\rmd \mu(x) = \int_{\calY} \rmd P_{X | Y = y}(x) \cdot \rmd \rho(y)\) over \(\calX\) satisfy \(\FuncIneq{}\)?
\end{itemize}

When \(\FuncIneq{}\) is the Poincar\'{e} or log-Sobolev inequality, answering \textbf{Q1} above automatically yields an answer to \textbf{Q2}.
This is because the Poincar\'{e} and log-Sobolev inequalities for the joint probability measures imply the same respectively for its marginals.
These questions are inherently not new and have been investigated in the past, focusing on the setting where \(\FuncIneq{}\) is the log-Sobolev inequality.
\citet{blower2005concentration,grunewald2009two} focus on \textbf{Q1} specifically, and concurrent work by \citet{otto2007new} provide a stronger result for \textbf{Q2} than that implied by \textbf{Q1} and marginalising the joint distribution \(\nu\).
Fundamentally, these results consider assumptions on each \(P_{X | Y = y} \in \bfP\) in addition to \(\rho\) and \(\bfP\) satisfying the aforementioned functional inequalities, and these assumptions are collectively referred to as ``two-scale criteria''\footnote{The two scales refer to \(\rho\) (macro-scale) and \(P_{X | Y = y} \in \bfP\) (micro-scale).}.
Recently, such two-scale criteria have been revisited in the papers by \citet{mou2019sampling,ge2020simulated} for the setting where \(\FuncIneq{}\) is the Poincar\'{e} inequality.

In \Cref{sec:two-scale-criteria} of this work, we identify criteria on the family of densities of \(\bfP\) -- novel to the best of our knowledge -- that answer \textbf{Q1} and \textbf{Q2} described above.
The two key features of these criteria are: (a) they imply the individual assumptions made in \citet{otto2007new,mou2019sampling,ge2020simulated}, and (b) they are obtained through a unified persective of these functional inequalities as instance of a \(\Phi\)-Sobolev inequality introduced by \citet{chafai2004entropies}
Technically, the manner in which we establish our results is more elementary in comparison to \citet{otto2007new}; they employ a Markov semigroup theory approach in their intermediate steps to arrive at their results.
Through our proof, we also additionally recover results about product and convolution probability measures formed by two independent distributions.

\subsection{Understanding MCMC estimation error}

We revisit our motivating problem in understanding the error in MCMC estimation in \Cref{sec:applications} of this work.
We show two key consequences of the criteria that we propose in the previous section.
Firstly, the criteria implies quantifiable bounds on Ollivier-Ricci curvature of \(\bfP\) \citep{ollivier2009ricci} which lead to non-asymptotic mixing time results for \(\bfP\).
Secondly, by answering \textbf{Q2}, we obtain an understanding of the evolution of functional inequalities along successive applications of \(\bfP\).
These directly contribute to control of the ``Concentration'' and ``Convergence'' terms in the error decomposition (\cref{eq:error-decomp}), and help differentiate strategies for collecting \(\calS_{N}\).
\section{Background}

\paragraph{Notation}
The Euclidean space in \(d\) dimensions is represented by \(\bbR^{d}\).
For any two vectors \(v, w \in \bbR^{d}\), \(\langle v, w\rangle\) denotes the Euclidean inner product, and the norm of \(v\) is given by \(\|v\| = \sqrt{\langle v, v\rangle}\).
The density of a distribution \(\pi\) over \(\Omega \subseteq \bbR^{d}\) w.r.t. the Lebesgue measure (if it exists) is denoted by \(\rmd\pi\).
Distributions with density w.r.t. the Lebesgue measure over \(\bbR^{d}\) is denoted by \(\calP_{\mathrm{ac}}(\bbR^{d})\).
We use the shorthand notation \(\bbE_{\pi}[f]\) for the expected value of \(f(x)\) where \(x \sim \pi\).
For a function \(\psi : \bbR^{d} \times \bbR^{d} \to \bbR\)  that is differentiable in both arguments, we use \(\nabla_{1}\psi(x, y)\) to denote the gradient of \(x \mapsto \psi(x, y)\), and \(\nabla_{2}\psi(x, y)\) to denote the gradient of \(y \mapsto \psi(x, y)\).
The composition of two functions \(f, g\) where the image of \(g\) is contained in the domain of \(f\) is denoted by \(f(g)\).

\subsection{Functional inequalities}

Let \(\pi\) be a probability measure over \(\bbR^{d}\) and \(\cssf{\bbR^{d}}\) be the set of compactly supported smooth functions over \(\bbR^{d}\).
Consider a function \(f\) such that \(\bbE_{\pi}[f^{2}] < \infty\).
The variance of \(f\) with respect to \(\pi\) is defined as
\begin{equation*}
    \var{}_{\pi}[f] \defeq \bbE_{\pi}[f^{2}] - (\bbE_{\pi}[f])^{2}~.
\end{equation*}

For a positive function \(f\) such that \(\bbE_{\pi}[f \cdot \log (f)] < \infty\), the entropy of \(f\) with respect to \(\pi\) is
\begin{equation*}
    \ent{}_{\pi}[f] \defeq \bbE_{\pi}[f \cdot \log f] - \bbE_{\pi}[f] \cdot \log\left(\bbE_{\pi}[f]\right)~.
\end{equation*}

Both the variance and the entropy are instances of a more general \(\Phi\)-entropy \citep{chafai2004entropies}, which we introduce next.
Let \(\Phi : \calS \to \bbR\) be a convex function over a convex subset \(\calS\) or \(\bbR\).
For \(f\) whose range is \(\calS\) and satisfies \(\bbE_{\pi}[\Phi(f)] < \infty\), the \(\Phi\)-entropy of \(f\) with respect to \(\pi\) is defined as
\begin{equation}
\label{eq:phi-entropy}
\Phient{\pi}{\Phi}{f} \defeq \bbE_{\pi}[\Phi(f)] - \Phi\left(\bbE_{\pi}[f]\right)~.
\end{equation}

With these definitions, we define the Poincar\'{e} and log-Sobolev inequalities in the Euclidean setting.
\begin{definition}[\(\PI{\gamma}\)]
Let \(\pi\) be a probability measure over \(\bbR^{d}\).
Then, \(\pi\) is said to satisfy a \emph{Poincar\'{e} inequality with constant \(\gamma > 0\)} if for all \(f \in \cssf{\bbR^{d}}\) that are suitably integrable,
\begin{equation*}
    \var{}_{\pi}[f] \leq \gamma \cdot \bbE_{\pi}[\|\nabla f\|^{2}]~.
\end{equation*}
\end{definition}

\begin{definition}[\(\LSI{\gamma}\)]
Let \(\pi\) be a probability measure over \(\bbR^{d}\).
Then, \(\pi\) is said to satisfy a \emph{log-Sobolev inequality with constant \(\gamma > 0\)} if for all \(f \in \cssf{\bbR^{d}}\) that are suitably integrable,
\begin{equation*}
    \ent{}_{\pi}[f^{2}] \leq 2\gamma \cdot \bbE_{\pi}[\|\nabla f\|^{2}]~.
\end{equation*}
\end{definition}

Similarly, the Poincar\'{e} and the log-Sobolev inequalities can be viewed as instances of a more general class of inequalities called \(\Phi\)-Sobolev inequalities \citep{chafai2004entropies} which is defined next.

\begin{definition}[\(\PhiSI{\gamma}\)]
Let \(\pi\) be a probability measure over \(\bbR^{d}\).
Then, \(\pi\) is said to satisfy a \(\Phi\)-Sobolev inequality with constant \(\gamma > 0\) if for all \(f \in \cssf{\bbR^{d}} \cap \{\phi : \bbR^{d} \to \calS\}\),
\begin{equation*}
    \Phient{\pi}{\Phi}{f} \leq \frac{\gamma}{2} \cdot \bbE_{\pi}[\Phi''(f) \cdot \|\nabla f\|^{2}]~.
\end{equation*}
\end{definition}

The purpose of introducing the \(\Phi\)-entropy and the \(\Phi\)-Sobolev inequality is to minimise redundancies in the computations made in the note.
This is due to the following: whence
\begin{itemize}[leftmargin=*, itemsep=0pt]
    \item \(\calS = \bbR\) and \(\Phi(t) = t^{2}\), \(\Phient{\pi}{\Phi}{} = \var{}_{\pi}\) and \(\PhiSI{\gamma}\) is \(\PI{\gamma}\), and
    \item \(\calS = (0, \infty)\) and \(\Phi(t) = t\log(t)\), \(\Phient{\pi}{\Phi}{} = \ent{}_{\pi}\) and \(\PhiSI{\gamma}\) is \(\LSI{\gamma}\)~.
\end{itemize}
Another setting of \(\Phi\) of interest is \(\Phi(t) = t^{\nicefrac{2}{p}}\) for \(p \in [1, 2)\).
The resulting \(\Phi\)-Sobolev inequality is related to the inequality of \citet{latala2007between}.

\subsection{Distances between probability measures, Ollivier-Ricci curvature}

In this work, we consider certain measures of discrepancy over the space of probability measures: distances such as the \(1\)-Wasserstein distance and the total variation distance, and divergences such as the \(\mathsf{KL}\) divergence and the family of R\'{e}nyi divergences.

\begin{definition}[\(\mathsf{W}_{1}\)]
Let \(\rho_{1}, \rho_{2}\) be probability measures over \(\bbR^{d}\).
The \(1\)-Wasserstein distance between \(\rho_{1}\) and \(\rho_{2}\) is defined as
\begin{equation*}
    \mathsf{W}_{1}(\rho_{1}, \rho_{2}) := \inf_{\varsigma} \bbE_{(\sfx_{1}, \sfx_{2}) \sim \varsigma}[\|\sfx_{1} - \sfx_{2}\|]
\end{equation*}
where the minimiser is taken over all possible joint distributions on \(\bbR^{d} \times \bbR^{d}\) such that its two marginals coincide with \(\rho_{1}\) and \(\rho_{2}\).
\end{definition}

\begin{definition}[\(\mathsf{TV}\)]
Let \(\rho_{1}, \rho_{2}\) be probability measures over \(\bbR^{d}\).
The total variation distance between \(\rho_{1}\) and \(\rho_{2}\) is defined as
\begin{equation*}
    \mathsf{TV}(\rho_{1}, \rho_{2}) := \sup_{A \subseteq \bbR^{d}} \rho_{1}(A) - \rho_{2}(A)~.
\end{equation*}
If \(\rho_{1}, \rho_{2} \in \calP_{\mathrm{ac}}(\bbR^{d})\), this is equivalent to
\begin{equation*}
    \mathsf{TV}(\rho_{1}, \rho_{2}) = \frac{1}{2}\int |\rmd \rho_{1}(x) - \rmd \rho_{2}(x)|~.
\end{equation*}
\end{definition}

\begin{definition}[\(\mathsf{KL}\)]
Let \(\rho_{1}, \rho_{2}\) be probability measures over \(\bbR^{d}\) such that \(\rho_{1}\) is absolutely continuous w.r.t. \(\rho_{2}\).
The Kullback-Leibler divergence between \(\rho_{1}\) and \(\rho_{2}\) is defined as
\begin{equation*}
    \mathsf{KL}(\rho_{1} \| \rho_{2}) := \int \log \frac{\rmd \rho_{1}}{\rmd \rho_{2}}(x) \rmd\rho_{1}(x)
\end{equation*}
where \(\frac{\rmd \rho_{1}}{\rmd \rho_{2}}\) is the relative density of \(\rho_{1}\) w.r.t. \(\rho_{2}\).
When \(\rho_{1}\) is not absolutely continuous w.r.t. \(\rho_{2}\), \(\mathsf{KL}(\rho_{1} \| \rho_{2}) = \infty\) by convention.
\end{definition}

\begin{definition}[\(\sfD_{q}\)]
Let \(\rho_{1}, \rho_{2}\) be probability measures over \(\bbR^{d}\) such that \(\rho_{1}\) is absolutely continuous w.r.t. \(\rho_{2}\).
For \(q > 1\), the \(q\)-R\'{e}nyi divergence between \(\rho_{1}\) and \(\rho_{2}\) is defined as
\begin{equation*}
    \sfD_{q}(\rho_{1} \| \rho_{2}) := \frac{1}{q - 1}\log \int \left(\frac{\rmd \rho_{1}}{\rmd \rho_{2}}(x)\right)^{q - 1}  \rmd \rho_{1}(x)~.
\end{equation*}
When \(\rho_{1}\) is not absolutely continuous w.r.t. \(\rho_{2}\), \(\sfD_{q}(\rho_{1} \| \rho_{2}) = \infty\) by convention.
\end{definition}

We now define the Ollivier-Ricci curvature.
Since our setup is specific to the Euclidean case, we state a version of \citet[Def. 1]{ollivier2009ricci} for this setting.

\begin{definition}
Let \(y_{1}, y_{2} \in \calX\) such that \(y_{1} \neq y_{2}\), and \(\bfP = \{P_{X | Y = y}\}_{y \in \bbR^{d}}\) be a collection of measures.
The Ollivier-Ricci curvature \(\kappa(y_{1}, y_{2})\) of \(\bfP\) along \(y_{1}, y_{2}\) is defined as
\begin{equation*}
    \kappa(y_{1}, y_{2}) := 1 - \frac{\mathsf{W}_{1}\left(P_{X | Y = y_{1}}, P_{X | Y = y_{2}}\right)}{\|y_{1} - y_{2}\|}~.
\end{equation*}
\end{definition}
\section{Two-scale criteria via \(\Phi\)-entropies}
\label{sec:two-scale-criteria}

\subsection{Problem setup}
\label{sec:prob-setup}

We consider the following setup in this work, expanding on \cref{sec:intro-two-scale}.
Let \(\rho\) be a probability measure over \(\bbR^{d}\) with density \(\rmd\rho\).
Each component \(P_{X | Y = y}\) and is assumed to be in \(\calP_{\mathrm{ac}}(\bbR^{d})\) with support is independent of \(y\).
\begin{equation}
\label{eq:kernel-general}
    \rmd P_{X | Y = y}(x) = \frac{1}{Z(y)} \cdot \exp\left(-G(x, y)\right)\rmd x~; \qquad p_{X | Y = y}(x) := \frac{\rmd P_{X | Y = y}(x)}{\rmd x}~.
\end{equation}
Above, \(Z(y)\) is the normalisation constant given by \(Z(y) = \int \exp(-G(x, y)) \rmd x\).
Recall that the joint distribution \(\nu\) over \(\bbR^{d} \times \bbR^{d}\) based on \(\rho\) and \(\bfP\) has density 
\begin{equation}
\label{eq:joint-density}
    \rmd \nu(x, y) = \rmd P_{X | Y = y}(x) \rmd \rho(y)~.
\end{equation}
Marginalising over the second variable \(y\) results in a mixture distribution with mixture components given by \(\bfP\) and mixing measure \(\rho\).
The density of \(\mu\) is
\begin{equation}
\label{eq:mixture-density}
    \rmd\mu(x) = \int_{\bbR^{d}} \rmd P_{X | Y = y}(x) \rmd\rho(y)~.
\end{equation}
We assume necessary regularity conditions for the identity \(\bbE_{x \sim P_{X | Y = y}}[\nabla_{y}\log p_{X | Y = y}(x)] = 0\) to hold for all \(y \in \bbR^{d}\); in particular for moving the gradient operator \(\nabla_{y}\) outside the expectation.

\subsection{Main results}

First, we define certain conditions that complete the two-scale criteria considered in this work.

\begin{description}
    \item [Exchange criterion (abbrev. \textsf{Exc}): ]
    \begin{equation*}
        \forall f \in \cssf{\bbR^{d}}, y \in \bbR^{d}~, \quad \left\|\bbE_{x \sim P_{X | Y = y}}\left[\nabla_{y} \log p_{X | Y = y}(x) \cdot f(x)\right]\right\|^{2} \leq \bar{L}^{2} \cdot \bbE_{x \sim P_{X | Y = y}}[\|\nabla f(x)\|^{2}]~.\tag{\textsf{Exc}}\label{eq:2scale-exc}
    \end{equation*}

    \item [Variance criterion (abbrev. \textsf{Var}): ]
    \begin{equation*}
        \forall~u, y \in \bbR^{d}~, \quad\bbE_{x \sim P_{X | Y = y}}\left[\left\langle u, \nabla_{y}\log p_{X | Y = y}(x)\right\rangle^{2}\right] \leq \bar{L}^{2} \cdot \|u\|^{2}~.\tag{\textsf{Var}}\label{eq:2scale-bvar}
    \end{equation*}
    \item [Moment generating function criterion (abbrev. \textsf{MGF}): ]
    \begin{equation*}
        \forall~u, y \in \bbR^{d}~, \quad \log \bbE_{x \sim P_{X | Y = y}}\left[\exp\left(\left\langle u, \nabla_{y}\log p_{X | Y = y}(x)\right\rangle \right)\right] \leq \frac{\bar{L}^{2} \cdot \|u\|^{2}}{2}~.\tag{\textsf{MGF}}\label{eq:2scale-bmgf}
    \end{equation*}
\end{description}

\subsubsection{Some remarks}
By definition, verifying \ref{eq:2scale-exc} is not straightforward, and is in contrast to \ref{eq:2scale-bvar} and \ref{eq:2scale-bmgf} which solely involve \(\nabla_{y}\log p_{X | Y = y}\) and is easier to verify.
However, as we demonstrate in the results presented in this section, \ref{eq:2scale-exc} leads to a more general assertion for \(\Phi\)-Sobolev inequalities.
We also note that \(\bar{L}\)-\textsf{MGF} implies \(\bar{L}\)-\textsf{Var}.
In particular, by exponentiating both sides in \ref{eq:2scale-bmgf} and taking \(u \leftarrow \lambda u\) for any \(u\) and \(\lambda > 0\), one has by the MacLaurin series for \(e^{x}\) that
\begin{equation*}
    \bbE_{x \sim P_{X | Y = y}}\left[1 + \lambda \cdot \langle u, \nabla_{y} \log p_{X | Y = y}(x)\rangle + \frac{\lambda^{2}}{2} \cdot \langle u, \nabla_{y} \log p_{X | Y = y}(x)\rangle^{2} + \ldots \right] \leq 1 + \frac{\lambda^{2} \cdot \bar{L}^{2} \cdot \|u\|^{2}}{2} + \ldots
\end{equation*}
Divide both sides by \(\lambda^{2}\), and since \(\bbE_{x \sim P_{X | Y = y}}[\nabla_{y}\log p_{X | Y = y}(x)] = 0\), taking the limit \(\lambda \to 0\) on both sides results in
\begin{equation*}
    \frac{1}{2} \cdot \bbE_{x \sim P_{X | Y = y}}[\langle u, \nabla_{y} \log p_{X | Y = y}(x)\rangle^{2}] \leq \frac{\bar{L}^{2} \cdot \|u\|^{2}}{2}
\end{equation*}
which is precisely \ref{eq:2scale-bvar}.
The conditions \ref{eq:2scale-exc}, \ref{eq:2scale-bvar} and \ref{eq:2scale-bmgf} can be viewed as complementary to the criteria in \citet{chen2021dimension}, who (in informal terms) assume that the family \(\bfP\) is ``confined'' i.e., for all pairs \(y, y' \in \calY\), \(P_{X | Y = y}\) and \(P_{X | Y = y'}\) are not too different as encoded by bounds on the worst case pairwise discrepancy between elements in \(\bfP\).
Their results also only concern the mixture \(\mu\).

The following theorems give results for the joint and marginal distributions \(\nu\) and \(\mu\) respectively, answering \textbf{Q1} and \textbf{Q2} posed in \cref{sec:intro-two-scale}.

\subsubsection{Main theorems}

\begin{theorem}
\label{thm:Phi-general-joint}
Consider the setup in \cref{sec:prob-setup}.
Let \(\Phi : \calS \to \bbR\) be a twice differentiable convex function.
Assume that \(\rho\) satisfies \(\PhiSI{\alpha}\) and that \(P_{y}\) satisfies \(\PhiSI{\beta}\) for all \(y\).
For \(\bar{L} > 0\), define
\begin{align*}
    \zeta(\alpha, \beta, \bar{L}) &= \inf_{\sfC > 0} \max\left\{\beta + \alpha \cdot (1 + \sfC^{-1}) \cdot \bar{L}^{2},~\alpha \cdot (1 + \sfC)\right\} \\
    &= \frac{1}{2}\left(\alpha + \beta + \alpha \cdot \bar{L}^{2} + \sqrt{4\alpha^{2} \cdot \bar{L}^{2} + (\beta - \alpha + \alpha \cdot \bar{L}^{2})^{2}}\right)~.
\end{align*}
The following statements hold.
\begin{enumerate}[leftmargin=*]
    \item If \(\Phi\) is such that \(\frac{1}{\Phi''}\) is concave and \ref{eq:2scale-exc} holds with \(\bar{L}\), then \(\nu\) satisfies \(\PhiSI{\zeta(\alpha, \beta, \bar{L})}\).
    \item If \(\Phi(t) = t^{2}\) with \(\calS = \bbR\) and \ref{eq:2scale-bvar} holds with \(\bar{L}\), then \(\nu\) satisfies \(\PI{\zeta(\alpha, \beta, \bar{L} \cdot \sqrt{\beta})}\).
    \item If \(\Phi(t) = t\log(t)\) with \(\calS = (0, \infty)\) and \ref{eq:2scale-bmgf} holds with \(\bar{L}\), then \(\nu\) satisfies \(\LSI{\zeta(\alpha, \beta, \bar{L} \cdot \sqrt{\beta})}\).
\end{enumerate}
\end{theorem}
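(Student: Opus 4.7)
The plan is to combine the $\Phi$-entropy tensorization identity (which specialises to the law of total variance for $\Phi(t)=t^{2}$ and to the classical entropy chain rule for $\Phi(t)=t\log t$) with a differentiation-under-the-integral computation and a Young-type split whose two resulting pieces are then each closed using the inner $\PhiSI{\beta}$: one directly, and the other after either a Cauchy--Schwarz estimate (under \ref{eq:2scale-bvar}) or a Donsker--Varadhan estimate (under \ref{eq:2scale-bmgf}). Setting $g(y) \defeq \bbE_{P_{X | Y = y}}[f(\cdot,y)]$, the decomposition
\[
\Phient{\nu}{\Phi}{f} = \bbE_{\rho}\left[\Phient{P_{X | Y = y}}{\Phi}{f(\cdot,y)}\right] + \Phient{\rho}{\Phi}{g}
\]
lets me apply $\PhiSI{\beta}$ on each conditional slice and $\PhiSI{\alpha}$ on the outer $\Phi$-entropy of $g$, reducing the task to controlling $\Phi''(g(y))\,\|\nabla g(y)\|^{2}$ in terms of $\bbE_{P_{X|Y=y}}[\Phi''(f)\|\nabla f\|^{2}]$.

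The key computation differentiates under the integral via $\nabla_{y}p_{X|Y=y}(x) = p_{X|Y=y}(x)\,\nabla_{y}\log p_{X|Y=y}(x)$:
\[
\nabla g(y) = \bbE_{P_{X|Y=y}}\left[\nabla_{2}f(x,y)\right] + \bbE_{P_{X|Y=y}}\left[f(x,y)\,\nabla_{y}\log p_{X|Y=y}(x)\right].
\]
Testing against a unit vector $u$ and applying the elementary inequality $(a+b)^{2}\leq(1+\sfC)a^{2}+(1+\sfC^{-1})b^{2}$ separates the two error sources, parameterised by $\sfC>0$. The first piece is handled by Jensen in the variance case, and by Cauchy--Schwarz (writing $\nabla_{2}f=(\nabla_{2}f/\sqrt{f})\sqrt{f}$) in the LSI case, contributing the $\|\nabla_{2}f\|^{2}$ energy to the right-hand side with prefactor $\alpha(1+\sfC)$ after the outer $\PhiSI{\alpha}$.

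For the score-involving piece, the two hypotheses are deployed as follows. Under \ref{eq:2scale-bvar}, the identity $\bbE_{P_{X|Y=y}}[\nabla_{y}\log p_{X|Y=y}] = 0$ lets me centre $f$ by $g(y)$ and apply Cauchy--Schwarz, yielding $\bar{L}^{2}\var_{P_{X|Y=y}}[f(\cdot,y)]$; a second use of the inner $\PI{\beta}$ converts this to $\beta\bar{L}^{2}\bbE_{P_{X|Y=y}}[\|\nabla_{1}f\|^{2}]$. Under \ref{eq:2scale-bmgf}, I apply the entropic (Donsker--Varadhan) variational formula to $q = f/g(y)$ with exponent $\lambda\langle u,\nabla_{y}\log p_{X|Y=y}\rangle$, use the MGF bound to control the log-moment by $\tfrac{1}{2}\lambda^{2}\bar{L}^{2}\|u\|^{2}$, and optimise over $\lambda$, producing the bound $2\bar{L}^{2}\ent_{P_{X|Y=y}}[f(\cdot,y)]\,g(y)$; dividing by $g(y)$ cancels the factor $\Phi''(g)=1/g$ from the outer LSI, after which the inner $\LSI{\beta}$ converts the entropy into $\beta\bar{L}^{2}\bbE_{P_{X|Y=y}}[\|\nabla_{1}f\|^{2}/f]$. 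Assembling with $\|\nabla f\|^{2} = \|\nabla_{1}f\|^{2}+\|\nabla_{2}f\|^{2}$ gives the prefactor $\max\{\beta+\alpha(1+\sfC^{-1})\bar{L}^{2}\beta,\,\alpha(1+\sfC)\}$, and infimising over $\sfC>0$ (the two arguments coincide at the stated value) produces $\zeta$.

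The main obstacle I anticipate is the LSI branch: the Donsker--Varadhan step must be normalised so that the spurious factor $g(y)$ that it produces cancels exactly against $\Phi''(g)=1/g$ from the outer inequality, and the $\lambda$-optimisation must be carried out inside the supremum over unit vectors $u$ so that the bound is dimension-free and sharp. Lining the constants up so that both the variance and the LSI cases arrive at identical expressions $\beta+\alpha(1+\sfC^{-1})\bar{L}^{2}\beta$ and $\alpha(1+\sfC)$ is exactly what the unified $\Phi$-Sobolev viewpoint buys; doing it separately would amount to two parallel proofs with different normalisations.
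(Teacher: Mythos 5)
Your proposal is correct and follows essentially the same route as the paper: the $\Phi$-entropy decomposition, differentiation under the integral to get $\nabla g(y)$, the Young split in $\sfC$, and closing the score term via Cauchy--Schwarz (PI case) or Donsker--Varadhan (LSI case) before a second use of the inner inequality. The only cosmetic difference is that the paper packages your two case-specific estimates for the score term into a single variational duality lemma for $\Phi$-entropies (its Proposition 5.4), of which Cauchy--Schwarz and Donsker--Varadhan are exactly the $t^{2}$ and $t\log t$ specialisations.
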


We also state a theorem for the mixture distribution \(\mu\) (\cref{eq:mixture-density}), addressing \textbf{Q2}.

\begin{theorem}
\label{thm:Phi-general-mixture}
Consider the setup in \cref{sec:prob-setup}.
Let \(\Phi : \calS \to \bbR\) be a twice differentiable convex function.
Assume that \(\rho\) satisfies \(\PhiSI{\alpha}\) and that \(P_{y}\) satisfies \(\PhiSI{\beta}\) for all \(y\).
For \(\bar{L} > 0\), define
\begin{equation*}
    \xi(\alpha, \beta, \bar{L}) = \beta + \alpha \cdot \bar{L}^{2}~.
\end{equation*}
The following statements hold.
\begin{enumerate}[leftmargin=*]
    \item If \(\Phi\) is such that \(\frac{1}{\Phi''}\) is concave and \ref{eq:2scale-exc} holds with \(\bar{L}\), then \(\mu\) satisfies \(\PhiSI{\xi(\alpha, \beta, \bar{L})}\).
    \item If \(\Phi(t) = t^{2}\) with \(\calS = \bbR\) and \ref{eq:2scale-bvar} holds with \(\bar{L}\), then \(\mu\) satisfies \(\PI{\xi(\alpha, \beta, \bar{L} \cdot \sqrt{\beta})}\).
    \item If \(\Phi(t) = t\log(t)\) with \(\calS = (0, \infty)\) and \ref{eq:2scale-bmgf} holds with \(\bar{L}\), then \(\mu\) satisfies \(\LSI{\xi(\alpha, \beta, \bar{L} \cdot \sqrt{\beta})}\).
\end{enumerate}
\end{theorem}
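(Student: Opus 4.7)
The plan is to reduce the mixture to the joint via the standard additive decomposition of the \(\Phi\)-entropy. For a test function \(f\) depending only on \(x\), both the mean and the \(\Phi\)-expectation of \(f\) under \(\mu\) coincide with those under \(\nu\), and the tensorization identity (valid for \(\Phi(t) = t^{2}\) and for \(\Phi(t) = t\log t\)) gives
\begin{equation*}
\Phient{\mu}{\Phi}{f} \;=\; \bbE_{y \sim \rho}\!\left[\Phient{P_{X|Y=y}}{\Phi}{f}\right] \;+\; \Phient{\rho}{\Phi}{g}, \qquad g(y) \defeq \bbE_{x \sim P_{X|Y=y}}[f(x)].
\end{equation*}
The first term is absorbed immediately by \(\PhiSI{\beta}\) on each slice, producing the micro-scale contribution \(\tfrac{\beta}{2}\bbE_{\mu}[\Phi''(f)\|\nabla f\|^{2}]\). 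The remaining work is to control the macro-scale term \(\Phient{\rho}{\Phi}{g}\).

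Applying \(\PhiSI{\alpha}\) to \(\rho\) reduces this to bounding \(\Phi''(g(y))\,\|\nabla g(y)\|^{2}\) pointwise in \(y\). Differentiating the normalisation \(\int p_{X|Y=y}\,\rmd x = 1\) yields the score identity \(\bbE_{P_{X|Y=y}}[\nabla_{y}\log p_{X|Y=y}] = 0\), so that for any centring \(c = c(y)\) and any unit vector \(u\),
\begin{equation*}
\langle u, \nabla g(y)\rangle \;=\; \bbE_{P_{X|Y=y}}\!\left[(f - c)\cdot \langle u, \nabla_{y}\log p_{X|Y=y}\rangle\right].
\end{equation*}
The two parts of the theorem differ only in how this bilinear form is estimated against the assumption on \(\{P_{X|Y=y}\}\).

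In the variance case, choosing \(c = g(y)\), applying Cauchy--Schwarz together with \ref{eq:2scale-bvar}, and invoking \(\PI{\beta}\) on each slice yields \(\|\nabla g(y)\|^{2} \le \bar{L}^{2}\,\var{}_{P_{X|Y=y}}[f] \le \bar{L}^{2}\beta\,\bbE_{P_{X|Y=y}}[\|\nabla f\|^{2}]\); averaging against \(\rho\) produces the macro contribution \(\alpha\beta\bar{L}^{2}\,\bbE_{\mu}[\|\nabla f\|^{2}]\), and summing with the micro term gives \(\xi = \beta + \alpha\beta\bar{L}^{2}\). In the log-Sobolev case (working with \(f > 0\) so that the \(\PhiSI\) form applies directly), I would instead use the Donsker--Varadhan variational inequality
\begin{equation*}
\bbE_{P_{X|Y=y}}[f\, G] \;\le\; \ent{}_{P_{X|Y=y}}[f] + g(y)\cdot \log \bbE_{P_{X|Y=y}}[e^{G}]
\end{equation*}
with \(G = \lambda\,\langle u, \nabla_{y}\log p_{X|Y=y}\rangle\); hypothesis \ref{eq:2scale-bmgf} controls \(\log\bbE_{P_{X|Y=y}}[e^{G}]\) by \(\bar{L}^{2}\lambda^{2}\|u\|^{2}/2\), and optimizing in \(\lambda\) followed by a supremum over unit \(u\) gives \(\|\nabla g(y)\|^{2}/g(y) \le 2\bar{L}^{2}\,\ent{}_{P_{X|Y=y}}[f]\) --- exactly the Fisher-information-like factor required by \(\LSI{\alpha}\) on \(\rho\). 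Invoking \(\LSI{\beta}\) on each slice closes the argument with the same accounting, recovering \(\xi\).

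I expect the log-Sobolev leg of the bilinear-form estimate to be the main obstacle: the Poincar\'e-style Cauchy--Schwarz step must be upgraded to a Donsker--Varadhan exchange that precisely produces the \(g(y)\) factor needed to match \(\Phi''(g)\|\nabla g\|^{2}\); once that is in place, the rest --- tensorization, applying \(\PhiSI\) on each scale, and the final accounting --- is routine.
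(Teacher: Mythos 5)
Your proposal is correct and follows essentially the same route as the paper: the additive decomposition of \(\Phient{\mu}{\Phi}{\cdot}\), absorbing the micro-scale term via \(\PhiSI{\beta}\), representing \(\nabla g(y)\) through the score \(\nabla_{y}\log p_{X|Y=y}\), and then estimating the resulting bilinear form by Cauchy--Schwarz in the Poincar\'{e} case and by the Donsker--Varadhan exchange in the log-Sobolev case. The only (cosmetic) difference is that the paper packages these two estimates as instances of a single variational inequality for \(\Phi\)-entropies (\cref{prop:product-phi-entropy}), which specializes exactly to the two steps you invoke directly.
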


The proofs of \cref{thm:Phi-general-joint,thm:Phi-general-mixture} are given in \cref{sec:prf:joint,sec:prf:mixture} respectively.
In doing so, we also identify the following hierarchy between \ref{eq:2scale-exc}, \ref{eq:2scale-bvar} and \ref{eq:2scale-bmgf}.

\begin{figure}[H]
\begin{tikzpicture}[
  node distance=1.5cm,
  box/.style={draw, rectangle, minimum height=0.5cm, minimum width=3.5cm, align=center, font=\small}
]

\node[box] (node1) {$\mathbf{P}$ satisfies $\mathsf{LSI}(\beta)$ + $\mathsf{MGF}(\bar{L})$};
\node[box, right=of node1] (node2) {$\mathbf{P}$ satisfies $\mathsf{PI}(\beta)$ + $\mathsf{Var}(\bar{L})$};
\node[box, right=of node2] (node3) {$\mathbf{P}$ satisfies $\mathsf{Exc}(\sqrt{\beta} \cdot \bar{L})$};

\draw[-{Stealth}, thick] (node1) -- (node2);
\draw[-{Stealth}, thick] (node2) -- (node3);

\draw[-{Stealth}, thick, bend left=20] (node1) to (node3);

\end{tikzpicture}
\caption{Relation between \ref{eq:2scale-exc}, \ref{eq:2scale-bvar}, and \ref{eq:2scale-bmgf}}
\label{fig:hierarchy}
\end{figure}

\subsubsection{Special cases when \(\bfP\) is not indexed by \(y\)}

In deriving \cref{thm:Phi-general-joint,thm:Phi-general-mixture}, we obtain a result for the setting where \(P \in \bfP\) doesn't depend measurably on \(y\).
In this case, \(\nu\) is a product distribution.
Since \(\mu = \rho\), we instead look at the convolution of \(\rho\) and \(P\).

\begin{lemma}
\label{lem:conv-product-measure}
Let \(\Phi\) be a function such that \(\frac{1}{\Phi''}\) is concave.
If \(\rho\) and \(P\) satisfy \(\PhiSI{\alpha}\) and \(\PhiSI{\beta}\) respectively, then
\begin{itemize}[leftmargin=*,itemsep=0pt]
    \item the product distribution \(\rho \otimes P\) satisfies \(\PhiSI{\max\{\alpha, \beta\}}\), and
    \item the convolved distribution \(\rho \ast P\) satisfies \(\PhiSI{\alpha + \beta}\).
\end{itemize}
\end{lemma}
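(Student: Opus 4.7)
The key ingredient is the tensorisation of $\Phi$-entropies under the hypothesis that $1/\Phi''$ is concave, due to \citet{chafai2004entropies}: for any product measure $\mu_1 \otimes \mu_2$ and suitably regular $f$,
\begin{equation*}
\Phient{\mu_1 \otimes \mu_2}{\Phi}{f} \leq \bbE_{\mu_2}\left[\Phient{\mu_1}{\Phi}{f(\cdot, x_2)}\right] + \bbE_{\mu_1}\left[\Phient{\mu_2}{\Phi}{f(x_1, \cdot)}\right]~.
\end{equation*}
I will invoke this as a black box; both parts of the lemma then fall out by combining it with the $\PhiSI$ inequalities on the two factors.

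For the product measure $\rho \otimes P$, I take any test function $f$ on $\bbR^{2d}$, apply the tensorisation above, and then bound each of the two inner $\Phi$-entropies using the slice-wise inequality $\PhiSI{\alpha}$ for $\rho$ (applied to $f(\cdot, x_2)$) and $\PhiSI{\beta}$ for $P$ (applied to $f(x_1, \cdot)$). This yields an upper bound of the form $\frac{\alpha}{2} \bbE[\Phi''(f) \cdot \|\nabla_x f\|^2] + \frac{\beta}{2} \bbE[\Phi''(f) \cdot \|\nabla_y f\|^2]$. Since $\|\nabla f\|^2 = \|\nabla_x f\|^2 + \|\nabla_y f\|^2$, replacing both prefactors by $\max\{\alpha, \beta\}$ and recombining the gradient norms gives $\PhiSI{\max\{\alpha, \beta\}}$.

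For the convolution $\rho \ast P$, I lift a test function $g \in \cssf{\bbR^d}$ to the function $f(x, y) \defeq g(x + y)$ on $\bbR^{2d}$. Two elementary observations do all the work: (i) the change-of-variables identity $\bbE_{\rho \otimes P}[h(X+Y)] = \bbE_{\rho \ast P}[h]$, applied to $h = \Phi \circ g$ and $h = g$, gives $\Phient{\rho \ast P}{\Phi}{g} = \Phient{\rho \otimes P}{\Phi}{f}$; (ii) $\nabla_x f(x,y) = \nabla_y f(x,y) = \nabla g(x+y)$. Running the same tensorisation + slice-wise $\PhiSI$ argument as above, the two partial-gradient energies now share a \emph{common} integrand $\|\nabla g(x+y)\|^2$, so the constants $\alpha$ and $\beta$ simply \emph{add} rather than being taken to a maximum. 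One final application of (i) pushes the right-hand side from $\rho \otimes P$ back to $\rho \ast P$, producing $\PhiSI{\alpha + \beta}$.

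\textbf{Main obstacle.} The real analytic content of the lemma is concentrated in the tensorisation inequality itself, which is precisely why the $1/\Phi''$-concavity hypothesis is required; once tensorisation is available, everything else is bookkeeping. For the two cases of interest, $\Phi(t) = t^2$ and $\Phi(t) = t \log t$, the map $1/\Phi''$ is respectively constant and affine on its domain, so the hypothesis is automatic and the lemma recovers the classical product and convolution statements for variance and entropy.
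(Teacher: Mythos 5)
Your proposal is correct and follows essentially the same route as the paper: both arguments reduce to the tensorisation (sub-additivity) of the $\Phi$-entropy over product measures under the concavity of $1/\Phi''$, followed by slice-wise application of the two $\PhiSI$ inequalities and the observation that for $\psi(x,y) = f(x+y)$ the two partial gradients coincide, so the constants add instead of being maximised. The only cosmetic difference is that you invoke Chafa\"{i}'s tensorisation inequality as a black box, whereas the paper extracts the same inequality from the intermediate bounds in its proof of \cref{thm:Phi-general-joint} specialised to $\bar{L} = 0$.
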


For \(\rho \otimes P\), the above lemma implies the result of \cref{thm:Phi-general-joint} with \(\bar{L} = 0\).
For \(\rho \ast P\), recall that \(z \sim \rho \ast P\) is equal in distribution to \(x + y\) where \(x \sim P\) and \(y \sim \rho\).
The first part of \cref{lem:conv-product-measure} applied to test functions of the form \(\psi(x, y) \leftarrow f(x + y)\) for \(f \in \cssf{\bbR^{d}}\), one can infer the weaker result: \(\rho \ast P\) satisfies \(\PhiSI{2 \cdot \max\{\alpha, \beta\}}\), and the second result of \cref{lem:conv-product-measure} improves this to \(\PhiSI{\alpha + \beta}\), which is strictly better.
See \cref{sec:prf:conv-product-measure} for the proof.

With regards to the tightness of the constant, the constant implied by the first part of \cref{lem:conv-product-measure} is tight and is realised in the case where \(\rho = \calN(0, \sigma^{2}_{1})\) and \(P = \calN(0, \sigma_{2}^{2})\).
From \citet[Cor. 9]{chafai2004entropies}, for \(\Phi\) satisfying the assumption in \cref{lem:conv-product-measure}, we know \(\rho, P\) satisfy \(\PhiSI{\sigma_{1}^{-2}}, \PhiSI{\sigma_{2}^{-2}}\) respectively.
Specifically, \(\rho \otimes P\) is a bivariate Gaussian \(\calN(0, \mathrm{diag}([\sigma_{1}^{2}, \sigma_{2}^{2}]))\) and satisfies \(\PhiSI{\min\{\sigma_{1}^{2}, \sigma_{2}^{2}\}^{-1}}\).
The constant \(\min\{\sigma_{1}^{2}, \sigma_{2}^{2}\}^{-1}\) is precisely \(\max\{\sigma_{1}^{-2}, \sigma_{2}^{-2}\}\) that is implied by the first part of \cref{lem:conv-product-measure}.

\subsubsection{Sufficient conditions for the two-scale criteria (\ref{eq:2scale-bvar}, \ref{eq:2scale-bmgf})}

Here, we discuss simpler conditions under which the two-scale criteria introduced can be found to hold.
We refer the reader to \cref{app:sec:defs} for the definitions of Lipschitz continuity and sub-Gaussianity.
We also make note of the following identity.
\begin{equation*}
    \nabla_{y}\log p_{X | Y = y}(x) = \bbE_{x \sim P_{X | Y = y}}[\nabla_{2}G(x, y)] - \nabla_{2}G(x, y)~.
\end{equation*}

\begin{condlist}[leftmargin=*]
    \item \underline{Assume that \(x \mapsto \nabla_{y}\log p_{X | Y = y}(x)\) is \(B\)-bounded for all \(y\).}
    \\
    In this setting, for all \(x, y \in \bbR^{d}\), we have that \(\|\nabla_{y}\log p_{X | Y = y}(x)\| \leq B\).
    \citet{mou2019sampling} consider this condition in their Lemma 1 towards answering \textbf{Q1}.
    Under this condition, we have for any \(u, y \in \bbR^{d}\) that
    \begin{align*}
        \bbE_{x \sim P_{X | Y = y}}\left[\left\langle u, \nabla_{y}\log p_{X | Y = y}(x)\right\rangle^{2}\right] &\leq \bbE_{x \sim P_{X | Y = y}}\left[\|u\|^{2} \cdot \|\nabla_{y}\log p_{X | Y = y}(x)\|^{2}\right] \\
        &\le B^{2} \cdot \|u\|^{2}~,
    \end{align*}
    which shows that \ref{eq:2scale-bvar} holds with \(\bar{L} = B\).
    Moreover, for any \(u \in \bbR^{d}\), this implies that \(x \mapsto \langle u, \nabla_{y}\log p_{X | Y = y}(x)\rangle\) lies in the range \([-\|u\| \cdot B, \|u\| \cdot B]\) by the Cauchy-Schwarz inequality.
    By Hoeffding's lemma, we have that for any \(u, y \in \bbR^{d}\) that
    \begin{equation*}
        \log \bbE_{x \sim P_{X | Y = y}}\left[\exp\left(\left\langle u, \nabla_{y}\log p_{X | Y = y}(x)\right\rangle \right)\right] \leq \frac{B^{2} \cdot \|u\|^{2}}{2}~.
    \end{equation*}
    This shows that \ref{eq:2scale-bmgf} holds with \(\bar{L} = B\).

    \item \underline{Assume that \(x \mapsto \nabla_{y}\log p_{X | Y = y}(x)\) has \(B\)-bounded variance w.r.t. \(P_{X | Y = y}\) for all \(y\).}
    \\
    This states that for all \(y \in \bbR^{d}\), \(\bbE_{x \sim P_{X | Y = y}}[\|\nabla_{y}\log p_{X | Y = y}(x)\|^{2}] \leq B^{2}\).
    \citet{ge2020simulated} consider this assumption in their Theorem D.3 towards answering \textbf{Q1}.
    In the context of information theory, the quantity \(\bbE_{x \sim P_{X | Y = y}}[\|\nabla_{y}\log p_{X | Y =y}(x)\|^{2}]\) is referred to as the \emph{pointwise statistical Fisher information} \citep[\S III.B]{wibisono2017information}.
    From the workings above, we can also infer that \ref{eq:2scale-bvar} holds with \(\bar{L} = B\) under this condition.

    \item \underline{Assume that the random variable \(\nabla_{y}\log p_{X | Y = y}(x)\) is \(\sigma\)-sub-Gaussian for \(x \!\sim P_{X | Y = y}\) for all \(y\).}\\
    In other words, for every \(y\) and unit vector \(u \in \bbR^{d}\), \(\langle u, \nabla_{y} \log p_{X | Y = y}(x, y)\rangle\) is sub-Gaussian w.r.t. \(P_{X | Y =y}\) with variance proxy \(\sigma^{2}\).
    This can be equivalently written as: for all \(y \in \bbR^{d}\) and \(u \in \bbR^{d}\),
    \begin{equation*}
        \log \bbE_{x \sim P_{X | Y = y}}\left[\exp\left(\left\langle u, \nabla_{y}\log p_{X | Y = y}(x)\right\rangle \right)\right] \leq \frac{\sigma^{2} \cdot \|u\|^{2}}{2}~,
    \end{equation*}
    which is precisely \ref{eq:2scale-bmgf} with \(\bar{L} = \sigma\).

    \item \label{cond:lips-cond} \underline{Assume that \(x \mapsto \nabla_{2}G(x, y)\) is \(L\)-Lipsschitiz continuous and \(P_{X | Y= y}\) for all \(y\).}\\
    This condition is the focus of \citet[Thm. 2, Lem. 7]{otto2007new}.
    Under this condition, for any \(u \in \bbR^{d}\), we have that \(x \mapsto \langle u, \nabla_{2}G(x, y)\rangle\) is \(L \cdot \|u\|\)-Lipschitz continuous.
    When \(\bfP\) satisfies \(\PI{\beta}\), we get
    \begin{align*}
        \bbE_{x \sim P_{X | Y = y}}\left[\left\langle u, \nabla_{y}\log p_{X | Y = y}(x)\right\rangle^{2}\right] &=
        \bbE_{x \sim P_{X | Y = y}}\left[\left\langle u, \nabla_{2}G(x, y) - \bbE_{x \sim P_{X | Y = y}}[\nabla_{2}G(x, y)]\right\rangle^{2}\right] \\
        &= \bbV_{x \sim P_{X | Y = y}}[\left\langle u, \nabla_{2}G(x, y)\right\rangle] \\
        &\leq \beta \cdot L^{2} \cdot \|u\|^{2}~.
    \end{align*}
    Therefore, \ref{eq:2scale-bvar} holds with \(\bar{L} = \sqrt{\beta} \cdot L\).
    Also, by Herbst's argument \citep[Prop. 5.41]{bakry2014analysis} for when \(\bfP\) satisfies \(\LSI{\beta}\), we have using the Lipschitz continuity of \(x \mapsto \langle u, \nabla_{2}G(x, y)\rangle\) that
    \begin{align*}
        \log \bbE_{x \sim P_{X | Y = y}}\left[\exp\left(\left\langle u, \nabla_{y}\right.\right.\right.\!\!\!\! &\left.\left.\left.\log p_{X | Y = y}(x)\right\rangle \right)\right] \\ 
        &= \log \bbE_{x \sim P_{X | Y= y}}\left[\exp\left(\langle u, \nabla_{2}G(x, y)\rangle - \bbE_{x \sim P_{X | Y =y}}\left[\langle u, \nabla_{2}G(x, y)\rangle\right]\right)\right] \\
        &\leq \frac{\beta \cdot L^{2} \cdot \|u\|^{2}}{2}~.
    \end{align*}
    This proves that \ref{eq:2scale-bmgf} holds with \(\bar{L} = \sqrt{\beta} \cdot L\).
\end{condlist}

\subsection{Examples}

Here, we discuss examples of Markov chains that satisfy these conditions.
We assume that these Markov chains are targeted at sampling from \(\pi^{\star}\) with density \(\rmd\pi^{\star}(x) = \exp(-V^{\star}(x))\rmd x\).

\subsubsection{The Unadjusted Langevin Algorithm}
When \(V^{\star}\) is differentiable, the unadjusted Langevin algorithm (ULA) \citep{roberts1996exponential} is defined by the iteration
\begin{equation*}
    X^{(k + 1)} = X^{(k)} - \eta \cdot \nabla V^{\star}(X^{(k)}) + \sqrt{2\eta} \cdot \xi_{k}~; \quad \xi_{k} \sim \calN(0, \rmI_{d})~.
\end{equation*}
The parameter \(\eta\) is the step size and the algorithm can be viewed as the Euler-Maruyama discretisation of the continuous-time overdamped Langevin dynamics defined by the SDE
\begin{equation*}
    \rmd X_{t} = -\nabla V^{\star}(X_{t})\rmd t + \sqrt{2}\rmd B_{t}
\end{equation*}
where \((B_{t})_{t \geq 0}\) is the Brownian motion.
Notably, the stationary distribution for this dynamics is \(\pi^{\star}\), which makes it a viable dynamics to discretise.
Thus, ULA defines the Markov kernel \(\bfP^{\text{ULA}} = \{P^{\text{ULA}}_{y}\}_{y \in \bbR^{d}}\) where \(P^{\text{ULA}}_{y} = \calN(y - \eta \cdot \nabla V^{\star}(y), 2h \cdot \rmI_{d})\).
This kernel satisfies \(\PhiSI{\eta}\) \citep[Cor 2.1]{chafai2004entropies} for any \(\Phi\) such that \(\frac{1}{\Phi''}\) is concave.
Next, we show that \(\bfP^{\text{ULA}}\) satisfies \ref{eq:2scale-exc}.
By definition, we have
\begin{equation*}
    \nabla_{y}\log p_{X | Y = y}^{\text{ULA}}(x) = \frac{1}{\eta} \left[(\rmI_{d} - \eta \cdot \nabla^{2}V^{\star}(y))\right](x - (y - \eta \cdot \nabla V^{\star}(y))~.
\end{equation*}
Consequently,
\begin{align*}
    \bbE_{x \sim P_{X | Y = y}^{\text{ULA}}}\left[f(x) \cdot \nabla_{y} \log p_{X | Y = y}^{\text{ULA}}(x) \right] &= [\rmI_{d} - \eta \cdot \nabla^{2}V^{\star}(y)] \int f(x) \left(- \nabla_{x} p_{X | Y = y}^{\text{ULA}}(x)\right) \rmd x \\
    &= [\rmI_{d} - \eta \cdot \nabla^{2}V^{\star}(y)] \cdot \int \nabla f(x) \cdot p_{X | Y = y}^{\text{ULA}}(x) \rmd x
\end{align*}
where the last step uses integration by parts (\cref{prop:ibp}).
Therefore,
\begin{align*}
    \left\|\bbE_{x \sim P_{X | Y = y}^{\text{ULA}}}[\nabla_{y}\log p_{X | Y = y}(x) \cdot f(x)]\right\|^{2} &\leq \sup_{y} \left\|\rmI_{d} - \eta \cdot \nabla^{2}V^{\star}(y)\right\|^{2} \cdot \left\|\bbE_{x \sim P_{X | Y = y}^{\text{ULA}}}[\nabla f(x)]\right\|^{2} \\
    &\leq \sup_{y} \left\|\rmI_{d} - \eta \cdot \nabla^{2}V^{\star}(y)\right\|_{\op}^{2} \cdot \bbE_{x \sim P_{X | Y = y}^{\text{ULA}}}[\|\nabla f(x)\|^{2}]~.
\end{align*}
This establishes that \(\bfP^{\text{ULA}}\) satisfies \ref{eq:2scale-exc} with \(\bar{L} = \sup_{y} \left\|\rmI_{d} - \eta \cdot \nabla^{2}V^{\star}(y)\right\|_{\op}\).

We demonstrate the ease of verifying \ref{eq:2scale-bvar} and \ref{eq:2scale-bmgf}.
Notice that \(x\mapsto \nabla_{y}\log p_{X | Y = y}^{\text{ULA}}(x)\) is Lipschitz continuous for all \(y\); this is evident from the gradient derived above which is linear in \(x\).
In particular, the Lipschitz continuity constant is \(\eta^{-1} \sup_{y} \|\rmI_{d} - \eta \cdot \nabla^{2}V^{\star}(y)\|_{\op}\).
Since \(P_{y}^{\text{ULA}}\) satisfies \(\LSI{\eta}\) and \(\PI{\eta}\), from the sufficient condition \ref{cond:lips-cond} for \ref{eq:2scale-bvar} and \ref{eq:2scale-bmgf} , we have that \(\bfP^{\text{ULA}}\) satisfies \ref{eq:2scale-bvar} and \ref{eq:2scale-bmgf} with \(\bar{L} = \eta^{-\nicefrac{1}{2}} \cdot \sup_{y} \|\rmI_{d} - \eta \cdot \nabla^{2}V^{\star}(y)\|_{\op}\).
This is consistent with verifying that \(\bfP^{\text{ULA}}\) satisfies \ref{eq:2scale-exc} through the hierarchy in \cref{fig:hierarchy}.

\paragraph{When \(V^{\star}\) is \(\alpha\)-strongly convex and \(L\)-smooth}

We highlight this case as a setting where it is possible to quantify \(\sup_{y} \|\rmI_{d} - \eta \cdot \nabla^{2} V^{\star}(y)\|_{\op}\) precisely.
\(L\)-smoothness of \(V^{\star}\) is equivalent to \(\nabla V^{\star}\) being \(L\)-Lipschitz continuous.
In this setting, we have
\begin{equation*}
    \sup_{y} \|\rmI_{d} - \eta \cdot \nabla^{2}V^{\star}(y)\|_{\op} = \max\{|1 - \eta \cdot L|, |1 - \eta \cdot m|\}~.
\end{equation*}

\subsubsection{The Proximal Sampling Algorithm}

The proximal sampling algorithm of \citet{lee2021structured} provides an alternative approach to sampling from \(\pi^{\star}\) by constructing an augmented distribution \(\tilde{\pi}^{\star}\) with parameter \(\eta > 0\) referred to as the step size, and whose density is given by
\begin{equation*}
    \rmd\tilde{\pi}^{\star}(x, y) \propto \exp\left(-\left\{\potential{}^{\star}(x) + \frac{\|y - x\|^{2}}{2\eta}\right\}\right)\rmd x\rmd y~,
\end{equation*}
and the procedure performs Gibbs' sampling over this augmented distribution as shown.
\begin{equation*}
    Y^{(k + 1)} \mid X^{(k)} \sim \tilde{\pi}^{\star}(y | x = X^{(k)})~; \qquad X^{(k + 1)} \mid Y^{(k + 1)} \sim \tilde{\pi}^{\star}(x | y = Y^{(k + 1)})~.
\end{equation*}
The conditional distribution \(\tilde{\pi}^{\star}(y | x = X^{(k)}\) (referred to as the forward kernel) is a Gaussian distribution with mean \(X^{(k)}\) and covariance \(2\eta \rmI_{d}\).
The forward kernel \(\bfP^{\text{Prox},F}_{y}\) where \(P_{y}^{\text{Prox, F}} = \calN(y, 2\eta \rmI_{d})\) satisfies \ref{eq:2scale-exc} with \(\bar{L} = 1\), and satisfies \(\PhiSI{\eta}\) for \(\Phi\) such that \(\frac{1}{\Phi''}\) is concave.

On the other hand, the conditional \(\tilde{\pi}^{\star}(x | y= Y^{(k + 1)})\) (referred to as the backward kernel) is non-trivial as it involves \(V^{\star}\).
We show that \(\bfP^{\text{Prox},B}_{y}\) where \(P_{y}^{\text{Prox,B}} = \widetilde{\pi}^{\star}(. | y = y)\) satisfies \ref{eq:2scale-exc} as well.
By writing the density of \(P_{y}^{\text{Prox,B}}\)
\begin{equation*}
    p_{X | Y = y}^{\text{Prox,B}}(x) = \frac{1}{Z(y)} \cdot \exp\left(-V^{\star}(x) - \frac{1}{2\eta}\|y - x\|^{2}\right)\rmd x
\end{equation*}
it can be immediately seen that by taking \(G^{\text{Prox,B}}(x, y) = V^{\star} + \frac{1}{2\eta}\|y- x\|^{2}\), we have
\(x\mapsto \nabla_{2}G^{\text{Prox,B}}(x, y) = \frac{x - y}{\eta}\), which is \(\frac{1}{\eta}\)-Lipschitz continuous.
Hence by \ref{cond:lips-cond}, if \(P^{\text{Prox,B}}_{y}\) satisfies \(\mathsf{PI}\) or \(\mathsf{LSI}\) for all \(y \in \bbR^{d}\), we can establish that \(\bfP^{\text{Prox,B}}\) satisfies \ref{eq:2scale-bvar} or \ref{eq:2scale-bmgf} respectively.
Below, we highlight cases where such functional inequalities for \(\bfP^{\text{Prox,B}}\) provably holds.

\paragraph{When \(\potential{}^{\star}\) is \(m\)-strongly convex.}
In this case, \(x \mapsto G^{\mathrm{Prox,B}}(x, y)\) is \(\left\{\mu + \frac{1}{\eta}\right\}\)-strongly convex for every \(y\).
By the Bakry-Emery criterion, for any \(y \in \bbR^{d}\)
\begin{equation*}
    P^{\mathrm{Prox,B}}_{X | Y = y} \text{ satisfies } \LSI{\beta}; \quad \beta = \left(m + \frac{1}{\eta}\right)^{-1}~.
\end{equation*}

We also have two cases where \(V^{\star}\) are perturbations of a \(m\)-strongly convex part, which means that \(V^{\star}\) is not necessarily convex.

\paragraph{When \(\potential{}^{\star} = \potential{}^{\star}_{m-\text{s.c.}} + \potential{}^{\star}_{B-\text{bdd.}}\)}

Here, \(\potential{}^{\star}_{B-\text{bdd.}}\) is a function that satisfies \(\mathrm{osc}(\potential{}^{\star}_{B-\text{bdd.}}) = B\) where \(\mathrm{osc}(f) := \sup_{x} f(x) - \inf_{x} f(x)\).
From the popular Holley-Stroock perturbation result \citep[Prop. 5.1.6]{bakry2014analysis}, we have for any \(y \in \bbR^{d}\) that
\begin{equation*}
    P^{\mathrm{Prox,B}}_{X | Y = y} \text{ satisfies } \LSI{\beta}; \quad \beta = e^{B} \cdot \left(m + \frac{1}{\eta}\right)^{-1}~.
\end{equation*}

\paragraph{When \(\potential{}^{\star} = \potential{}^{\star}_{m-\text{s.c.}} + \potential{}^{\star}_{L-\text{Lip}}\).}
Here, \(\potential{}^{\star}_{\text{Lip}}\) is \(L\)-Lipschitz continuous function
For every \(y\), \(x \mapsto G^{\mathrm{Prox,B}}(x, y)\) is sum of a \(\left(m + \frac{1}{\eta}\right)\)-strongly convex function and a \(L\)-Lipschitz continuous function.
The following result shown by \citet{brigati2024heat} pertains to this specific setting.
\begin{proposition}[{\citep[Thm. 1.4]{brigati2024heat}}]
    Let \(\pi\) be a distribution over \(\bbR^{d}\) with density of the form \(\rmd\pi(x) \propto e^{-V(x)}\rmd x\), where \(V = V_{m-\text{s.c.}} + V_{L-\text{Lip}}\).
    Then, there exists a \(\frakL\)-Lipschitz continuous map \(\frakT\) such that \(\frakT_{\#}\calN(0, \rmI_{d}) = \pi\) where
    \begin{equation*}
        \frakL = \frac{1}{\sqrt{m}} \cdot \exp\left(\frac{L^{2}}{2m} + \frac{2L}{\sqrt{m}}\right)~.
    \end{equation*}
    Consequently, \(\pi\) satisfies \(\LSI{\frakL^{2}}\).
\end{proposition}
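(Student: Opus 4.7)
The plan is to produce an $\frakL$-Lipschitz transport map $\frakT : \bbR^{d} \to \bbR^{d}$ satisfying $\frakT_{\#}\calN(0, \rmI_{d}) = \pi$, and then invoke the standard transfer principle: if $\rho$ satisfies $\LSI{\gamma}$ and $T$ is $\frakL$-Lipschitz, then $T_{\#}\rho$ satisfies $\LSI{\frakL^{2} \cdot \gamma}$. Since $\calN(0, \rmI_{d})$ satisfies $\LSI{1}$ by Gross's theorem, the $\LSI{\frakL^{2}}$ conclusion follows immediately once the map is constructed with the correct constant.

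The map I would build is the Kim--Milman reverse heat flow (Brownian transport map). Let $p_{t}$ denote the density of $\pi * \calN(0, t \cdot \rmI_{d})$, which interpolates between $\pi$ at $t=0$ and a (rescaled) Gaussian as $t \to \infty$. The map $\frakT$ is obtained by integrating, backward in time, an ODE/SDE driven by the score $\nabla \log p_{t}$; the Jacobian $D\frakT_{t}$ satisfies a matrix Riccati-type equation whose coefficients involve $\nabla^{2} \log p_{t}$. Controlling $\|D\frakT_{t}\|_{\op}$ via a Gronwall argument then reduces the entire problem to uniform Hessian bounds on $\log p_{t}$.

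Using the classical identity $\nabla^{2}\log p_{t}(x) = -\frac{1}{t}\rmI_{d} + \frac{1}{t^{2}}\mathrm{Cov}(X \mid X + \sqrt{t}Z = x)$, I would bound the conditional covariance using the decomposition $V = V_{\mu\text{-s.c.}} + V_{L\text{-Lip}}$. The conditional law of $X$ given $X + \sqrt{t}Z = y$ has potential $V(x) + \|y-x\|^{2}/(2t)$, so its strongly-convex part has modulus $\mu + 1/t$. The Brascamp--Lieb covariance inequality applied to the purely log-concave surrogate (dropping $V_{L\text{-Lip}}$) yields a baseline bound $(\mu + 1/t)^{-1}\rmI_{d}$. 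The Lipschitz perturbation is then reintroduced via an exponential change of measure: the Radon--Nikodym derivative against the log-concave surrogate is controlled by $\exp(L \cdot \|x - \text{mean}\|)$, and a Gaussian-type Laplace transform bound on this displacement yields a multiplicative correction of order $\exp(O(L/\sqrt{\mu}))$. Propagating these estimates through Gronwall and integrating over $t$ produces the stated constant $\frakL = \mu^{-1/2}\cdot \exp(L^{2}/(2\mu) + 2L/\sqrt{\mu})$.

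The main obstacle is getting the sharp exponent rather than a cruder bound of the form $\mu^{-1/2}\exp(O(L^{2}/\mu))$. Sharpness rests on separating the two contributions of the Lipschitz piece at the Hessian level before squaring into the Lipschitz constant: the Laplace-transform estimate produces the Gaussian-type $L^{2}/(2\mu)$ term, while the displacement of the conditional mean produces the linear $2L/\sqrt{\mu}$ term, and combining them naively double-counts. This delicate accounting is the technical core of \citet{brigati2024heat} and would be where I expect to spend the bulk of the effort when reconstructing the argument.
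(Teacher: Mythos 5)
First, note that the paper does not prove this proposition at all: it is imported verbatim from \citet[Thm.~1.4]{brigati2024heat} and used as a black box, so there is no in-paper argument to compare yours against. Judged on its own terms, your proposal correctly identifies the two-stage architecture of the cited proof: (i) the Lipschitz transfer principle for LSI, which you state correctly --- with the paper's normalisation $\ent{}_{\pi}[f^{2}] \leq 2\gamma\,\bbE_{\pi}[\|\nabla f\|^{2}]$ the standard Gaussian satisfies $\LSI{1}$ and an $\frakL$-Lipschitz pushforward degrades the constant by exactly $\frakL^{2}$ via the chain rule --- and (ii) the construction of the map as the Kim--Milman reverse heat flow, with the Lipschitz bound reduced to uniform control of $\nabla^{2}\log p_{t}$ through the conditional-covariance identity and a Brascamp--Lieb-plus-perturbation estimate. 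This is indeed how the cited work proceeds, so the orientation is right.

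The gap is that stage (ii) is a roadmap rather than a proof. Everything that actually produces the constant $\frakL = \mu^{-1/2}\exp\bigl(L^{2}/(2\mu) + 2L/\sqrt{\mu}\bigr)$ --- the quantitative covariance bound for the tilted conditional law under the Lipschitz perturbation, the Gronwall propagation through the Riccati-type equation for the Jacobian, and the integration over $t \in (0,\infty)$ that must converge to exactly this expression --- is deferred, and you acknowledge as much. In particular, the step where you ``reintroduce the Lipschitz perturbation via an exponential change of measure'' is exactly where a naive argument yields a Holley--Stroock-type factor $e^{\mathrm{osc}(V_{L\text{-Lip}})}$, which is infinite for an unbounded Lipschitz perturbation; making that step quantitative requires the Laplace-transform and moment estimates that constitute the technical core of the reference, and without them the claimed exponent (as opposed to some $\exp(O(L^{2}/\mu))$ surrogate) is not established. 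As a reconstruction of the literature result the plan is credible, but it does not yet prove the proposition; for the purposes of this note, citing the result as the paper does is the correct move, and a self-contained proof would require you to actually carry out the Hessian estimates along the flow.
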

With the above proposition, we have for any \(y\) that
\begin{equation*}
    P^{\mathrm{Prox,B}}_{X | Y = y} \text{ satisfies } \LSI{\beta}; \quad \beta = \frac{\eta}{\eta \cdot \mu + 1} \cdot \exp\left(\frac{\eta \cdot L^{2}}{\eta \cdot \mu + 1} + \frac{4L \cdot \sqrt{\eta}}{\sqrt{\eta \cdot \mu + 1}}\right)~.
\end{equation*}

We conclude the discussion for the proximal sampling algorithm by reiterating (again) through this example the ease of checking \ref{eq:2scale-bvar} and \ref{eq:2scale-bmgf}.
\section{Applications to MCMC}
\label{sec:applications}

In this section, we highlight how the two-scale criteria help us understand components of errors in MCMC estimation as depicted in \cref{eq:error-decomp}.
In summary, the goal is to estimate \(\bbE_{\pi^{\star}}[f]\) using a collection of \(N\) samples \(\calS_{N}\) using the estimate \(\widehat{F}(f, \calS_{N})\) defined as the empirical average of evaluations of \(f\) on \(\calS_{N}\).
Here, we specifically focus on the setting where \(f\) is Lipschitz continuous.
We also target simple procedures for collecting \(\calS_{N}\).
\begin{description}
    \item [Scheme 1]: Run \(N\) Markov chains in parallel for \(J\) transitions, and accumulate \(\calS_{N}\) as the final iterates of these \(N\) chains.
    \item [Scheme 2]: Run \(1\) Markov chain for \(J\) transitions, and collect a sample every \(K\) transitions thereafter.
\end{description}
The parameter \(J\) is referred to as the \emph{burnin period}.
While the first procedure is beneficial in that it collects independent samples, it discards all previous iterates generated from the Markov chains which might still be valuable.
The second procedure addresses this drawback at the cost of producing dependent samples.
While the dependency cannot be eliminated, the correlation between samples can reduced via the \emph{thinning} parameter \(K\).

\subsection{From \textsf{Exc} to mixing}

We first discuss how \ref{eq:2scale-exc} can be used to infer of mixing in \(\sfW_{1}\) for \(\bfP\) to its stationary distribution.
This is particularly valuable since \(f\) is \(L\)-Lipschitz continuous, and how this yields bounds for the convergence term in the error decomposition in \cref{eq:error-decomp}.
In particular, we focus on the Ollivier-Ricci curvature of \(\bfP\) and its implications.
The developments of this subsection are facilitated by the dual form of \(\sfW_{1}\) (called the Kantorovich-Rubenstein duality \citep[Eq. 5.11]{villani2009optimal}).
This is central to the discussion of this section.
\begin{proposition}
\label{prop:W1-dual}
    Let \(\rho_{1}, \rho_{2}\) be probability measures over \(\bbR^{d}\).
    The \(1\)-Wasserstein distance between \(\rho_{1}\) and \(\rho_{2}\) is also defined as
    \begin{equation*}
        \sfW_{1}(\rho_{1}, \rho_{2}) = \sup_{g} \bbE_{\rho_{1}}[g] - \bbE_{\rho_{2}}[g]
    \end{equation*}
    where the supremum is taken over all \(1\)-Lipschitz continuous functions.
\end{proposition}

We use this duality to show that \ref{eq:2scale-exc} implies bounds on the Ollivier-Ricci curvature of \(\bfP\).
\begin{theorem}
\label{thm:ricci-curvature}
Let \(\bfP = \{P_{y}\}\) satisfy \ref{eq:2scale-exc} with \(\bar{L}\).
Then for any \(y_{1}, y_{2} \in \bbR^{d}\)
\begin{equation*}
    \kappa(y_{1}, y_{2}) \geq 1 - \bar{L}~.
\end{equation*}
\end{theorem}
We prove this theorem in \Cref{sec:prf:ricci-curvature}.
This result states that when \(\bar{L} < 1\), the Ricci curvature is positive in \(\bbR^{d}\), which has implications for mixing.
Specifically, from \citet[Prop. 20]{ollivier2009ricci}, we can quantify properties how close mixtures obtained from different mixing distributions are relative to the closeness of their mixing distributions; see the corollary below.
\begin{proposition}
\label{prop:w1-contract}
Let \(\mu_{1}, \mu_{2}\) be mixtures of \(\bfP\) generated with mixing distributions \(\rho_{1}, \rho_{2}\) respectively, where \(\rho_{1}, \rho_{2} \in \calP_{\mathrm{ac}}(\bbR^{d})\).
Assume that \(\bfP\) satisfies \ref{eq:2scale-exc} with \(\bar{L}\).
Then
\begin{equation*}
    \mathsf{W}_{1}(\mu_{1}, \mu_{2}) \leq \bar{L} \cdot \mathsf{W}_{1}(\rho_{1}, \rho_{2})~.
\end{equation*}
Moreover, if \(\bar{L} < 1\), then there exists a unique \(\rho^{\star} \in \mathcal{P}_{\mathrm{ac}}(\bbR^{d})\) such that
\begin{equation*}
    \rmd \rho^{\star}(x) = \int_{\bbR^{d}} p_{X | Y = y}(x)\rmd \rho^{\star}(y)~.
\end{equation*}
In other words, \(\rho^{\star}\) is the stationary distribution of \(\bfP\).
\end{proposition}

Since our focus is on the case where \(f\) is \(L\)-Lipschitz continuous, we can use the duality of \(\mathsf{W}_{1}\) (\cref{prop:W1-dual}) to provide bounds on the convergence component of the error in \cref{eq:error-decomp}.
We provide the proof of the lemma below in \Cref{sec:prf:conv-error-bound}.

\begin{lemma}
\label{lem:conv-error-bound}
Suppose the kernel \(\bfP\) satisfies \ref{eq:2scale-exc} with \(\bar{L} < 1\).
Then we have the following bounds.
        \begin{align*}
            \textbf{Scheme 1:} & & \quad \left|\bbE[\widehat{F}(f; \calS_{N})] - F^{\star}(f; \rho^{\star})\right| & \leq L \cdot \bar{L}^{J} \cdot \mathsf{W}_{1}(\rho_{0}, \rho^{\star})\\
            \textbf{Scheme 2:} & & \quad \left|\bbE[\widehat{F}(f; \calS_{N})] - F^{\star}(f; \rho^{\star})\right| & \leq \frac{1}{N}\sum_{i=1}^{N} \bar{L}^{J + (i - 1)K} \cdot \mathsf{W}_{1}(\rho_{0}, \rho^{\star})~.
        \end{align*}
\end{lemma}

\subsection{Understanding concentration}

Given our understanding of how the population quantity \(\bbE[\widehat{F}(f; \calS_{N})]\) differs from the bias limit \(F^{\star}(f; \rho^{\star})\), we now focus on how \(\widehat{F}(f; \calS_{N})\) concentrates around the mean --- this specifically the first term in \cref{eq:error-decomp} given by
\begin{equation*}
    \widehat{F}(f, \calS_{N}) - \bbE[\widehat{F}(f, \calS_{N})]
\end{equation*}
where the expectation is taken over the randomness in the sampling procedure.
By a generic Chernoff technique, we can bound this deviation in probability as
\begin{align*}
    \bbP\left(\widehat{F}(f, \calS_{N}) - \bbE[\widehat{F}(f, \calS_{N})] > t\right) &= \bbP\left(\exp\left\{\lambda \cdot \left[\widehat{F}(f, \calS_{N}) - \bbE[\widehat{F}(f, \calS_{N})]\right]\right\} > \exp(\lambda t)\right) \\
    &\leq \frac{1}{\exp(\lambda t)} \cdot \bbE\left[\exp\left(\lambda \cdot \left\{\widehat{F}(f, \calS_{N}) - \bbE\left[\widehat{F}(f, \calS_{N})\right]\right\}\right)\right]~.
\end{align*}
In the following lemmas, we give bounds for the expectation on the right hand side.
We make the following assumption about distributions satisfying \(\Phi\mathsf{SI}\) to obtain quantitative rates.
\begin{assumption}
\label{assump:mgf}
If a distribution \(P\) satisfies \(\PhiSI{c}\), then there exists a function \(\varphi_{\Phi}\) and \(\Lambda \in \bbR \cup \{\infty\}\) such that for a \(1\)-Lipschitz continuous function \(g\)
\begin{equation*}
    \bbE_{\sfx \sim P}\left[\exp\left(\lambda \cdot (g - \bbE_{P}[g])\right)\right] \leq \varphi_{\Phi}(\lambda; c)~\quad \forall ~\lambda < \Lambda~.
\end{equation*}
\end{assumption}
This holds concretely in the following choices for \(\Phi\).
\begin{center}
\renewcommand{\arraystretch}{1.5}
\begin{tabular}{ccccc}
\(\Phi(t)\) & Domain of \(\Phi\) & \(\varphi_{\Phi}\) & \(\Lambda\) & Reference \\
\hline
\(t\log(t)\) & \((0, \infty)\) & \(\exp\left(\frac{c\lambda^{2}}{2}\right)\) & \(\infty\) & \citet[Prop. 5.4.1]{bakry2014analysis} \\
\(t^{\nicefrac{2}{p}}\), \(p \in [1, 2)\) & \((0, \infty)\) & \(\left(1 - \frac{c\lambda^{2}(2 - p)}{4}\right)^{-\frac{2}{2 - p}}\) & \(\frac{2}{\sqrt{c(2 - p)}}\) & \citet[Thm. 1]{latala2007between}
\end{tabular}
\end{center}

\begin{lemma}
\label{lem:scheme1-chernoff}
Let the initial distribution \(\rho_{0}\) satisfies \(\PhiSI{\alpha_{0}}\), and the kernel \(\bfP\) satisfies \ref{eq:2scale-exc} and \(\PhiSI{\beta}\) for all \(P_{y} \in \bfP\).
Suppose \cref{assump:mgf} holds for this \(\Phi\) and that \(\frac{1}{\Phi''}\) is concave.
Define \(\alpha_{J} = \beta \sum_{i=0}^{J- 1}\bar{L}^{2i} + \alpha_{0}\bar{L}^{2J}\).
Then, samples collected using \textbf{Scheme 1} satisfies \(\widehat{F}(f; \calS_{N}) - \bbE[\widehat{F}(f; \calS_{N})] > t\) with probability at most
\begin{align*}
\exp\left(-\frac{Nt^{2}}{2\alpha_{J}}\right)~ &\quad \text{when } \Phi(t) = t\log t \\
\exp\left(-\frac{2N}{2 - p}\left\{\bar{\alpha}_{J}(t; p) - 1 - \log(\bar{\alpha}_{J}(t; p) + 1) + \log(2)\right\}\right)~ &\quad \text{when } \Phi(t) = t^{\nicefrac{2}{p}}, p \in [1, 2) \enskip \\
{\scriptstyle \bar{\alpha}_{J}(t; p) = \sqrt{1 + \frac{t^{2}(2 - p)}{\alpha_{J}L^{2}}}}
\end{align*}
\end{lemma}
The proof of this lemma is given in \Cref{sec:prf:scheme1-chernoff}.

\begin{lemma}
\label{lem:scheme2-chernoff}
Let the initial distribution \(\rho_{0}\) satisfies \(\PhiSI{\alpha_{0}}\), and the kernel \(\bfP\) satisfies \ref{eq:2scale-exc} with \(\bar{L} < 1\) and \(\PhiSI{\beta}\) for all \(P_{y} \in \bfP\).
Suppose \cref{assump:mgf} holds for this \(\Phi\) and that \(\frac{1}{\Phi''}\) is concave.
Then, samples collected using \textbf{Scheme 2} satisfies \(\widehat{F}(f; \calS_{N}) - \bbE[\widehat{F}(f; \calS_{N})] > t\) with probability at most
\begin{align*}
\exp\left(-\frac{N^{2}t^{2}}{2L^{2}\frakQ(\alpha_{0}, \beta, \bar{L}, K, J, N)}\right)~ &\quad \text{when } \Phi(t) = t\log t
\end{align*}
where \(\frakQ(\alpha_{0}, \beta, \bar{L}, K, J, N) = \frac{\beta \cdot N}{1 - \bar{L}^{2}} + \alpha_{0}\bar{L}^{2J}\left(1 + \frac{\bar{L}^{2}}{1 - \bar{L}^{2K}} \right)\).
\end{lemma}

The proof of this lemma in given in \Cref{sec:prf:scheme2-chernoff}.
We also provide the workings for the \(\Phi(t) = t^{\nicefrac{2}{p}}\), which can be numerically solved, but analytically is tedious to compute in closed form.

\subsection{Relating the bias and the limiting distribution \(\rho^{\star}\)}

Finally, we address the bias in \cref{eq:error-decomp}.
Since \(f\) is \(L\)-Lipschitz continuous, this quantity can be bounded using \Cref{prop:W1-dual} as
\begin{equation*}
    \left| F^{\star}(f; \rho^{\star}) - F^{\star}(f; \pi^{\star})\right| \leq L \cdot \mathsf{W}_{1}(\rho^{\star}, \pi^{\star})~.
\end{equation*}
Furthermore, when \(\pi^{\star}\) satisfies a Talagrand \(\mathsf{T}_{1}\) inequality, the \(\mathsf{W}_{1}\) distance between \(\rho^{\star}\) and \(\pi^{\star}\) can be bounded by the \(\mathsf{KL}\) divergence between \(\rho^{\star}\) and \(\pi^{\star}\).
This quantity has been studied extensively for a variety of Markov kernels \(\bfP\).

Deviating from the prior exposition where \(f\) is assumed to be Lipschitz continuous, we discuss this error more generally and connect properties of \(f\) to various measures of discrepancy between \(\rho^{\star}\) and\ \(\pi^{\star}\) like the total variation distance and R\'{e}nyi divergences.
We assume that \(\rho^{\star}\) w.r.t. \(\pi^{\star}\) for this discussion, and therefore \(\mathsf{KL}(\rho^{\star} \| \pi^{\star})\) and \(\sfD_{q}(\rho^{\star} \| \pi^{\star})\) are finite quantities.

\begin{lemma}
\label{lem:tv-bounded}
Let \(f\) be a function such that \(|f(x)| \leq B\) for all \(x \in \bbR^{d}\).
Then,
\begin{equation*}
    |F^{\star}(f; \rho^{\star}) - F^{\star}(f; \pi^{\star})| \leq B \cdot \mathsf{TV}(\rho^{\star}, \pi^{\star})~.
\end{equation*}
\end{lemma}

\begin{lemma}
\label{lem:kl-mgf}
Let \(f\) be a function such that \(\frac{1}{\lambda^{p}}\log \bbE_{\pi^{\star}}[\exp(\lambda \cdot (f - \bbE_{\pi^{\star}}[f]))] < \infty\) for all \(\lambda \in \bbR\) and some \(p > 1\).
Then,
\begin{equation*}
    \left|F^{\star}(f; \rho^{\star}) - F^{\star}(f; \pi^{\star})\right| \leq \left(\mathsf{KL}(\rho^{\star} \| \pi^{\star})\right)^{\frac{p - 1}{p}} \cdot \left(\sup_{\lambda > 0} \frac{1}{\lambda^{p}} \log \bbE_{\pi^{\star}}\left[\exp(\lambda \cdot (f - \bbE_{\pi^{\star}}[f]))\right] \right)^{\frac{1}{p}} \cdot \frac{p}{(p - 1)^{\frac{p - 1}{p}}}~.
\end{equation*}
\end{lemma}

\begin{lemma}
\label{lem:renyi-moment}
Let \(f\) be a function such that \(\bbE_{\pi^{\star}}[|f|^{\frac{q}{q - 1}}] < \infty\) for \(q > 1\).
Then,
\begin{equation*}
    \left|F^{\star}(f; \rho^{\star}) - F^{\star}(f; \pi^{\star})\right| \leq \left(\exp((q - 1) \sfD_{q}(\rho^{\star} \| \pi^{\star})) - 1\right)^{\frac{1}{q}} \cdot \bbE_{\pi^{\star}}[|f|^{\frac{q}{q - 1}}]^{\frac{q - 1}{q}}~.
\end{equation*}
\end{lemma}

We remark that \(\sfD_{q}(\rho^{\star} \| \pi^{\star})\) is non-increasing in \(q\).
This implies that
\begin{equation*}
    2 \cdot \mathsf{TV}(\rho^{\star} \| \pi^{\star})^{2} \leq  \mathsf{KL}(\rho^{\star} \| \pi^{\star}) = \lim_{q' \to 1} \sfD_{q'}(\rho^{\star} \| \pi^{\star}) \leq \sfD_{q_{1}}(\rho^{\star} \| \pi^{\star}) \leq \sfD_{q_{2}}(\rho^{\star} \| \pi^{\star})
\end{equation*}
for \(1 < q_{1} \leq q_{2}\).
In other words, having a bound on \(\sfD_{q}(\rho^{\star} \| \pi^{\star})\) for any \(q > 1\) implies a bound for \(\sfD_{q'}(\rho^{\star} \| \pi^{\star})\) for \(q' \leq q\).
A consequence of this is that one would expect weaker conditions on \(f\) to control the bias when given bounds on \(\sfD_{q}(\rho^{\star} \| \pi^{\star})\) than when given bounds on \(\mathsf{KL}(\rho^{\star} \| \pi^{\star})\).
This intuition can be validated by observing the conditions over \(f\) in \Cref{lem:kl-mgf,lem:renyi-moment}.
In particular, for the random variable \(f(\sfx)\) where \(\sfx \sim \pi^{\star}\), the condition in \Cref{lem:kl-mgf} pertains to the MGF, whereas the conditon in \Cref{lem:renyi-moment} pertains to a certain moment of \(f(\sfx)\) for \(x \sim \pi^{\star}\).
Moreover, as \(q\) decreases, \(\frac{q}{q - 1}\) increases which implies control over higher order moments of \(f(\sfx)\).
We prove the above lemmas in \Cref{sec:prf:bias-bounds}.
\section{Proofs}
\label{sec:proofs}

\subsection{Proofs of the results in \cref{sec:two-scale-criteria}}

In this section, we give the proofs for \cref{thm:Phi-general-joint,thm:Phi-general-mixture}.
We begin by stating a collection of smaller propositions that come in handy, and the proofs of these propositions are given in \cref{sec:proofs-prop-proofs}.

\begin{proposition}
\label{prop:expected-value-kernel-test-grad}
Let \(P_{X | Y = y}\) be given by \cref{eq:kernel-general}, and \(\psi : \bbR^{d} \times \bbR^{d} \to \bbR\) be a differentiable in both its arguments.
Assuming that \(M(y) := \bbE_{x \sim P_{X | Y = y}}[\psi(x, y)]\) exists for all \(y\), it holds that
\begin{equation*}
    \nabla M(y) = \bbE_{x \sim P_{X | Y = y}}\left[\nabla_{y}\log p_{X | Y= y}(x) \cdot \psi(x, y)\right] + \bbE_{x \sim P_{X | Y = y}}[\nabla_{2}\psi(x, y)]~.
\end{equation*}
\end{proposition}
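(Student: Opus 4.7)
The plan is to differentiate $M(y) = \int \psi(x,y)\, p_{X|Y=y}(x)\,\mathrm{d}x$ under the integral sign and then apply the product rule together with the ``log-derivative trick'' $\nabla_{y} p_{X|Y=y}(x) = p_{X|Y=y}(x)\cdot \nabla_{y}\log p_{X|Y=y}(x)$. This identity is well suited here because it converts the awkward derivative of a density into an expectation involving the score, which is exactly the form that appears on the right-hand side of the claim.

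Concretely, I would first write
\begin{equation*}
    \nabla M(y) = \nabla_{y}\int \psi(x,y)\,p_{X|Y=y}(x)\,\mathrm{d}x = \int \nabla_{y}\bigl[\psi(x,y)\,p_{X|Y=y}(x)\bigr]\,\mathrm{d}x~,
\end{equation*}
invoking a dominated-convergence / Leibniz justification under the implicit integrability hypotheses. Next I would apply the product rule to the integrand, giving
\begin{equation*}
    \nabla_{y}\bigl[\psi(x,y)\,p_{X|Y=y}(x)\bigr] = \nabla_{2}\psi(x,y)\cdot p_{X|Y=y}(x) + \psi(x,y)\cdot \nabla_{y}p_{X|Y=y}(x)~.
\end{equation*}
Substituting $\nabla_{y}p_{X|Y=y}(x) = p_{X|Y=y}(x)\cdot \nabla_{y}\log p_{X|Y=y}(x)$ in the second term and pulling the density back into the integral as an expectation under $P_{X|Y=y}$ yields exactly the two expectations stated in the claim.

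The only real obstacle is justifying the exchange of derivative and integral, since the proposition is stated with minimal explicit regularity assumptions. I would address this by treating the hypothesis ``$M(y)$ exists for all $y$'' together with the differentiability of $\psi$ and $(x,y)\mapsto G(x,y)$ (which is implicit in the problem setup of Section \ref{sec:prob-setup}) as providing the necessary local dominating integrability of $\nabla_{y}[\psi(x,y)\,p_{X|Y=y}(x)]$. Under those standard conditions the exchange is valid, and no further computation is needed beyond the product rule and the score identity. As a sanity check one recovers \cref{prop:grad-conditional} by setting $\psi \equiv 1$, which forces $\nabla M \equiv 0$ and gives $\mathbb{E}_{x \sim P_{X|Y=y}}[\nabla_{y}\log p_{X|Y=y}(x)] = 0$, consistently with the general score-mean-zero property.
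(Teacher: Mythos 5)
Your proposal is correct and follows essentially the same route as the paper: differentiate under the integral sign and apply the product rule. The only cosmetic difference is that you use the log-derivative trick \(\nabla_{y}p_{X|Y=y} = p_{X|Y=y}\cdot\nabla_{y}\log p_{X|Y=y}\) directly, whereas the paper expands \(p_{X|Y=y} = \exp(-G)/Z\), differentiates each factor, and reassembles the score via \cref{prop:grad-conditional} — the two computations are identical in substance.
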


The next two propositions state a ``decomposition lemma'' for the \(\Phi\)-entropy \(\bbJ^{\Phi}\), and form the starting points for the proofs of the theorems in \cref{sec:two-scale-criteria}.
Such decomposition lemmas have appeared implicitly or explicitly in prior work, focusing on specific \(\Phi\).

\begin{proposition}
\label{prop:decomp-phient}
Let \(\nu\) be the joint distribution formed by \(P_{X | Y = y}\) and \(\rho\) with density defined in \cref{eq:joint-density}.
For any \(\Phi : \calS \to \bbR\) and \(\psi : \bbR^{d} \times \bbR^{d} \to \calS\), we have
\begin{equation*}
    \Phient{\nu}{\Phi}{\psi} = \bbE_{y \sim \rho}\left[\Phient{x \sim P_{X | Y = y}}{\Phi}{\psi(x, y)}\right] + \Phient{y \sim \rho}{\Phi}{\bbE_{x \sim P_{X | Y = y}}[\psi(x, y)]}~.
\end{equation*}
When \(\psi\) is simply a function of \(x\), we obtain
\begin{equation*}
    \Phient{\mu}{\Phi}{\psi} = \bbE_{y \sim \rho}\left[\Phient{P_{X | Y = y}}{\Phi}{\psi(x)}\right] + \Phient{y \sim \rho}{\Phi}{\bbE_{P_{X | Y = y}}[\psi]}~
\end{equation*}
where \(\mu\) is the mixture distribution with density defined in \cref{eq:mixture-density}.
\end{proposition}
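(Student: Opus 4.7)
The plan is to prove the decomposition as a direct consequence of the tower property of conditional expectation together with an add-and-subtract trick centered on the quantity $\Phi(\bbE_{x \sim P_{X|Y=y}}[\psi(x,y)])$. The calculation is essentially the $\Phi$-analogue of the law of total variance, so I would expect no substantive obstacle beyond careful bookkeeping.

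First, I would expand the left-hand side using the definition in \cref{eq:phi-entropy}:
\begin{equation*}
    \Phient{\nu}{\Phi}{\psi} = \bbE_{\nu}[\Phi(\psi(x,y))] - \Phi\bigl(\bbE_{\nu}[\psi(x,y)]\bigr).
\end{equation*}
Since $\rmd\nu(x,y) = \rmd P_{X|Y=y}(x)\,\rmd\rho(y)$, the tower property gives
\begin{equation*}
    \bbE_{\nu}[\Phi(\psi(x,y))] = \bbE_{y \sim \rho}\bigl[\bbE_{x \sim P_{X|Y=y}}[\Phi(\psi(x,y))]\bigr],\qquad \bbE_{\nu}[\psi(x,y)] = \bbE_{y \sim \rho}\bigl[\bbE_{x \sim P_{X|Y=y}}[\psi(x,y)]\bigr].
\end{equation*}

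Next, I would introduce the shorthand $M(y) \defeq \bbE_{x \sim P_{X|Y=y}}[\psi(x,y)]$ and add/subtract $\bbE_{y \sim \rho}[\Phi(M(y))]$ inside the expression for $\Phient{\nu}{\Phi}{\psi}$. This splits the right-hand side into two groups:
\begin{equation*}
    \underbrace{\bbE_{y \sim \rho}\bigl[\bbE_{x \sim P_{X|Y=y}}[\Phi(\psi(x,y))] - \Phi(M(y))\bigr]}_{(\star)} + \underbrace{\bbE_{y \sim \rho}[\Phi(M(y))] - \Phi\bigl(\bbE_{y \sim \rho}[M(y)]\bigr)}_{(\star\star)}.
\end{equation*}
By definition, $(\star) = \bbE_{y \sim \rho}\bigl[\Phient{x \sim P_{X|Y=y}}{\Phi}{\psi(x,y)}\bigr]$ since the inner bracketed quantity is exactly the $\Phi$-entropy of $x \mapsto \psi(x,y)$ under $P_{X|Y=y}$. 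Similarly, $(\star\star) = \Phient{y \sim \rho}{\Phi}{M(y)}$, which yields the claimed identity.

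The specialization to functions $\psi$ depending only on $x$ follows from the observation that in this case $\bbE_{\nu}[\Phi(\psi(x))] = \bbE_{\mu}[\Phi(\psi(x))]$ and $\bbE_{\nu}[\psi(x)] = \bbE_{\mu}[\psi(x)]$ by the definition of $\mu$ in \cref{eq:mixture-density}, so that $\Phient{\nu}{\Phi}{\psi} = \Phient{\mu}{\Phi}{\psi}$; the same add-and-subtract decomposition then yields the second formula. The only conceptual subtlety is ensuring that $M(y)$ lies in $\calS$ so that $\Phi(M(y))$ is well defined, but this follows from the convexity of $\calS$ together with $\psi$ taking values in $\calS$. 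No further hypothesis on $\Phi$ (such as smoothness or convexity) is invoked at this stage.
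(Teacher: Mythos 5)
Your proof is correct and follows essentially the same route as the paper: expand the definition of $\bbJ_{\nu}^{\Phi}$, apply the tower property, add and subtract $\bbE_{y \sim \rho}[\Phi(M(y))]$, and identify the two resulting groups as the inner and outer $\Phi$-entropies. The handling of the mixture case via $\bbE_{\nu}[g(x)] = \bbE_{\mu}[g(x)]$ also matches the paper's argument.
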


The next lemma is central to the unifying approach and is based on the variational formulation of the \(\Phi\)-entropy \(\bbJ^{\Phi}\) (\citet[Prop. 4]{chafai2004entropies}, \citet[Lem. 1]{boucheron2005moment}).
We begin with the definition of the convex conjugate of \(\Phi\), denoted by \(\Phi^{\star}\).
\begin{equation*}
    \Phi^{\star}(y) = \sup_{x \in \calS} xy - \Phi(x)~,
\end{equation*}
By optimality, we have for any \(t \in \calS\)
\begin{equation}
\label{eq:conv-conj-identity}
    \Phi(t) + \Phi^{\star}(\Phi'(t)) = t \cdot \Phi'(t)~.
\end{equation}
Moreover, when \(\Phi\) is a Legendre type function \citep[Chap. 26]{rockafellar1997convex}, the domain of \(\Phi^{\star}\) is the range of \(\Phi'\) and it holds that \({\Phi^{\star}}'(\Phi'(t)) = t\) for all \(t \in \calS\).

\begin{proposition}
\label{prop:product-phi-entropy}
Let \(\Phi : \calS \to \bbR\) be a twice-differentiable function such that \(\frac{1}{\Phi''}\) is concave.
Then, for any probability measure \(\pi\) and suitably integrable functions \(f_{1}, f_{2}\) over \(\calS\), it holds that
\begin{equation*}
    \bbE_{\pi}[(\Phi'(f_{1}) - \bbE_{\pi}[\Phi'(f_{1})]) \cdot f_{2}] \leq \Phient{\pi}{\Phi}{f_{2}} + \bbE_{\pi}[f_{2}] \cdot (\Phi'(\bbE_{\pi}[f_{1}]) - \bbE_{\pi}[\Phi'(f_{1})]) + \Phient{\pi}{\Phi^{\star}(\Phi')}{f_{1}}~.
\end{equation*}
\end{proposition}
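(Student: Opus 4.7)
The plan is to reduce the claimed inequality, through routine algebraic manipulation, to a Jensen-type inequality for the Bregman divergence $D_{\Phi}(a, b) \defeq \Phi(a) - \Phi(b) - \Phi'(b)(a - b)$, and then invoke the fact that this Bregman divergence is jointly convex under the assumption that $1/\Phi''$ is concave. So the entire proof is really two ingredients: a rewriting step and one application of Jensen's inequality.

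First, I would expand $\Phient{\pi}{\Phi^{\star}(\Phi')}{f}$ using \cref{eq:conv-conj-identity}, which gives $\Phi^{\star}(\Phi'(t)) = t\Phi'(t) - \Phi(t)$ pointwise. This turns the third summand on the right-hand side into $\bbE_{\pi}[f\Phi'(f)] - \bbE_{\pi}[\Phi(f)] - \bbE_{\pi}[f]\Phi'(\bbE_{\pi}[f]) + \Phi(\bbE_{\pi}[f])$. Next, I would collect all terms on both sides: the cross terms $\bbE_{\pi}[g]\bbE_{\pi}[\Phi'(f)]$ cancel (appearing on the left through the definition of $(\Phi'(f) - \bbE_{\pi}[\Phi'(f)])g$ and on the right through the factor $\bbE_{\pi}[g](\Phi'(\bbE_{\pi}[f]) - \bbE_{\pi}[\Phi'(f)])$). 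After cancellation, the surviving pointwise terms on the left regroup to $-\bbE_{\pi}[D_{\Phi}(g, f)]$, and the surviving constant terms on the right regroup to $-D_{\Phi}(\bbE_{\pi}[g], \bbE_{\pi}[f])$. The claim is therefore equivalent to
\begin{equation*}
    D_{\Phi}(\bbE_{\pi}[g], \bbE_{\pi}[f]) \leq \bbE_{\pi}[D_{\Phi}(g, f)]~.
\end{equation*}

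To close the argument, I would invoke the classical characterisation (going back to work around the variational formulation of $\Phi$-entropies used in \citet{chafai2004entropies,boucheron2005moment}) that the Bregman divergence $(a, b) \mapsto D_{\Phi}(a, b)$ is jointly convex precisely when $1/\Phi''$ is concave on $\calS$. Given the joint convexity, Jensen's inequality applied to the $\pi$-distribution of the pair $(g, f)$ immediately yields the displayed inequality above, and hence the proposition.

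The main obstacle is psychological rather than mathematical: one has to trust the bookkeeping through the rewriting step, since several terms of mixed form (products of expectations, expectations of products, and $\Phi$ or $\Phi'$ evaluated at expectations) must be correctly grouped into two Bregman divergences. The structural input — joint convexity of $D_{\Phi}$ under concavity of $1/\Phi''$ — is exactly what makes this proposition work, and it is the same hypothesis that underlies the \(\Phi\)-Sobolev framework used throughout the note, so no further assumption is needed.
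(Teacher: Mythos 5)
Your argument is correct and is essentially the paper's proof run in reverse: the inequality of \citet[Prop.~4]{chafai2004entropies} that the paper takes as its starting point, namely \(\Phient{\pi}{\Phi}{g} \geq \Phient{\pi}{\Phi}{f} + \bbE_{\pi}[(\Phi'(f) - \Phi'(\bbE_{\pi}[f]))\cdot(g-f)]\), is exactly the Jensen inequality \(D_{\Phi}(\bbE_{\pi}[g],\bbE_{\pi}[f]) \leq \bbE_{\pi}[D_{\Phi}(g,f)]\) for the Bregman divergence to which you reduce the claim, and the conjugate-identity bookkeeping via \cref{eq:conv-conj-identity} is the same in both directions. The two proofs therefore differ only in which form of the underlying convexity fact is cited as the external input --- Chafa\"{i}'s variational inequality versus the joint convexity of \(D_{\Phi}\) under concavity of \(1/\Phi''\).
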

The above discussion and proposition is relevant since for both \(t \xrightarrow{\Phi} t^{2}\) and \(t \xrightarrow{\Phi} t\log(t)\), (1) \(\frac{1}{\Phi''}\) is concave.
These functions are also of Legendre type, the domain of their respective \(\Phi^{\star}\) is \(\bbR\).

\subsubsection{Proof of \cref{thm:Phi-general-joint}}
\label{sec:prf:joint}

\begin{proof}
To streamline the proofs, we make the assumptions that
\begin{equation*}
    P_{X | Y = y} \text{ satisfies } \PhiSI{\beta} ~\forall ~ y~;\qquad \rho \text{ satisfies } \PhiSI{\alpha}~,
\end{equation*}
As discussed previously, this corresponds to the Poincar\'{e} and log-Sobolev inequalities for certain choices of \(\Phi\), and we instantiate these later to prove the results for these \(\Phi\) specifically.

We would like to show that
\begin{equation*}
    \Phient{\nu}{\Phi}{\psi} \leq \frac{\zeta}{2} \cdot \bbE_{\nu}[\Phi''(\psi) \cdot \|\nabla \psi\|^{2}]~,
\end{equation*}
where \(\zeta\) is defined in \cref{thm:Phi-general-joint}.
We begin with \cref{prop:decomp-phient}, which gives us
\begin{equation*}
    \Phient{\nu}{\Phi}{\psi} = \underbrace{\bbE_{y \sim \rho}\left[\Phient{x \sim P_{X | Y = y}}{\Phi}{\psi(x, y)}\right]}_{T_{1}} + \underbrace{\Phient{y \sim \rho}{\Phi}{\bbE_{x \sim P_{X | Y = y}}[\psi(x, y)]}}_{T_{2}}~.
\end{equation*}
We now study \(T_{1}\) and \(T_{2}\) individually.

\paragraph{Bounding \(T_{1}\)}
Since \(P_{X | Y = y}\) satisfies \(\PhiSI{\beta}\) for all \(y\), we have for any \(y\) that
\begin{equation*}
    \Phient{x \sim P_{X | Y =y}}{\Phi}{\psi(x, y)} \leq \frac{\beta}{2} \cdot \bbE_{x \sim P_{X | Y = y}}[\Phi''(\psi(x, y)) \cdot \|\nabla_{1}\psi(x, y)\|^{2}]~.
\end{equation*}
Consequently,
\begin{align*}
    T_{1} &= \bbE_{y \sim \rho}\left[\Phient{x \sim P_{X | Y = y}}{\Phi}{\psi(x, y)}\right] \\
    &\leq \frac{\beta}{2} \cdot \bbE_{y \sim \rho}\left[\bbE_{x \sim P_{X | Y = y}}[\Phi''(\psi(x, y)) \cdot \|\nabla_{1}\psi(x, y)\|^{2}]\right] \\
    &= \frac{\beta}{2} \cdot \bbE_{\nu}[\Phi''(\psi) \cdot \|\nabla_{1}\psi\|^{2}]~.
\end{align*}

\paragraph{Bounding \(T_{2}\)}
For convenience, we consider the following notation
\begin{equation}
\label{eq:shorthand-not}
    M(y) \equiv \bbE_{x \sim P_{X | Y = y}}[\psi(x, y)]~; \quad \calQ_{u, y} \equiv \langle u, \nabla M(y)\rangle~; \quad h_{u, y}(x) \equiv \langle u, \nabla_{y}\log p_{X | Y = y}(x)\rangle~.
\end{equation}
Since \(\rho\) satisfies \(\PhiSI{\alpha}\), we have
\begin{equation*}
    \Phient{y \sim \rho}{\Phi}{M(y)} \leq \frac{\alpha}{2} \cdot \bbE_{y \sim \rho}[\Phi''(M(y)) \cdot \|\nabla M(y)\|^{2}]~.
\end{equation*}
By the variational definition of the norm, we have
\begin{equation*}
    \|\nabla M(y)\|^{2} = \sup_{u : \|u\| \leq 1} {\calQ_{u, y}}^{2}~.
\end{equation*}
Using the expression for \(\nabla M(y)\) from \cref{prop:expected-value-kernel-test-grad}, we have for \(u\) such that \(\|u\| \leq 1\) that
\begin{align*}
    {\calQ_{u, y}}^{2} &= \left(\left\langle u, \bbE_{x \sim P_{X | Y = y}}\left[\nabla_{2}\psi(x, y)\right]\right\rangle + \left\langle u, \bbE_{x \sim P_{X | Y = y}}\left[\nabla_{y}\log p_{X | Y = y}(x) \cdot \psi(x, y)\right]\right\rangle\right)^{2} \\
    &\overset{(a)}\leq (1 + \sfC) \cdot \left\langle u, \bbE_{x \sim P_{X | Y = y}}\left[\nabla_{2}\psi(x, y)\right]\right\rangle^{2} \\
    &\qquad + (1 + \sfC^{-1}) \cdot \left\langle u, \bbE_{x \sim P_{X | Y = y}}\left[\nabla_{y}\log p_{X | Y = y}(x) \cdot \psi(x, y)\right]\right\rangle^{2} \\
    &\overset{(b)}\leq (1 + \sfC) \cdot \bbE_{x \sim P_{X | Y = y}}[\|\nabla_{2}\psi(x, y)\|]^{2} \\
    &\qquad + (1 + \sfC^{-1}) \cdot \left\langle u, \bbE_{x \sim P_{X | Y = y}}\left[\nabla_{y}\log p_{X | Y = y}(x) \cdot \psi(x, y) \right]\right\rangle^{2}~.
\end{align*}
Step \((a)\) is due to Young's inequality (\cref{prop:youngs-square}).
Step \((b)\) uses that fact that \(\|u\| \leq 1\), and the convexity of the Euclidean norm.
Therefore, we have for \(u\) such that \(\|\nabla M(y)\|^{2} = {\calQ_{u, y}}^{2}\) that
\begin{align*}
\Phient{\nu}{\Phi}{\psi} &\leq \underbrace{\frac{\beta}{2} \cdot \bbE_{\nu}[\Phi''(\psi) \cdot \|\nabla_{1}\psi\|^{2}]}_{T_{1}'} + \underbrace{\frac{\alpha \cdot (1 + \sfC)}{2} \cdot \left\{\bbE_{y \sim \rho}\left[\Phi''(M(y)) \cdot \bbE_{x \sim P_{X | Y = y}}\left[\|\nabla_{2}\psi(x, y)\|\right]^{2}\right]\right\}}_{T_{2}'} \\
&~ + \underbrace{\frac{\alpha \cdot (1 + \sfC^{-1})}{2} \cdot \left\{\bbE_{y \sim \rho}\left[\Phi''(M(y)) \cdot \left\langle u, \bbE_{x \sim P_{X | Y = y}}\left[\nabla_{y}\log p_{X | Y = y}(x) \cdot \psi(x, y) \right]\right\rangle^{2}\right]\right\}}_{T_{3}'}~.\numberthis\label{eq:prelim-T1-T2-sum-bound-joint}
\end{align*}

The term \(T_{1}'\) requires no further simplification as it is of the desired form.
We simplify \(T_{2}'\) and \(T_{3}'\).
\begin{align*}
    \bbE_{x \sim P_{X | Y = y}}\left[\|\nabla_{2}\psi(x, y)\|\right]^{2} &= \bbE_{x \sim P_{X | Y = y}}\left[\|\nabla_{2}\psi(x, y)\| \cdot \frac{\sqrt{\Phi''(\psi(x, y))}}{\sqrt{\Phi''(\psi(x, y))}}\right]^{2} \\
    &\overset{(a)}\leq \bbE_{x \sim P_{X | Y = y}}\left[\Phi''(\psi(x, y)) \cdot \|\nabla_{2}\psi(x, y)\|^{2}\right] \cdot \bbE_{x \sim P_{X | Y = y}}\left[\frac{1}{\Phi''(\psi(x, y))}\right] \\
    &\overset{(b)}\leq \bbE_{x \sim P_{X | Y = y}}\left[\Phi''(\psi(x, y)) \cdot \|\nabla_{2}\psi(x, y)\|^{2}\right] \cdot \frac{1}{\Phi''\left(\bbE_{x \sim P_{X | Y = y}}[\psi(x, y)]\right)}~\\
    &= \bbE_{x \sim P_{X | Y = y}}\left[\Phi''(\psi(x, y)) \cdot \|\nabla_{2}\psi(x, y)\|^{2}\right] \cdot \frac{1}{\Phi''(M(y))}~.
\end{align*}
In step \((a)\), we use the Cauchy-Schwarz inequality, and in step \((b)\) we use the concavity of \(\frac{1}{\Phi''}\).
This provides a simplified bound for \(T_{2}'\) as follows.
\begin{equation}
\label{eq:T2dash-bound}
    T_{2}' \leq \frac{\alpha \cdot (1 + \sfC)}{2} \cdot \bbE_{\nu}\left[\Phi''(\psi) \cdot \|\nabla_{2}\psi\|^{2}\right]~.
\end{equation}

We handle \(T_{3}'\) in the three settings described in this statement of the theorem.
Since \(\|u\| \leq 1\),
\begin{align*}
    \left\langle u, \bbE_{x \sim P_{X | Y = y}}\left[\nabla_{y}\log p_{X | Y = y}(x) \cdot \psi(x, y)\right]\right\rangle^{2} &\leq \left\|\bbE_{x \sim P_{X | Y = y}}\left[\nabla_{y}\log p_{X | Y = y}(x) \cdot \psi(x, y)\right]\right\|^{2} \\
    &\overset{(a)}\leq \bar{L}^{2} \cdot \bbE_{x \sim P_{X | Y = y}}\left[\|\nabla_{1}\psi(x, y)\|^{2}\right]~.
\end{align*}
In step \((a)\), we apply \ref{eq:2scale-exc} for \(\bfP\).
In this same manner as dealt with \(T_{2}'\), when \(\frac{1}{\Phi''}\) is concave
\begin{equation*}
    \bbE_{x \sim P_{X | Y = y}}\left[\|\nabla_{1}\psi(x, y)\|^{2}\right] \leq \bbE_{x \sim P_{X | Y = y}}\left[\Phi''(\psi(x, y)) \cdot \|\nabla_{1}\psi(x, y)\|^{2}\right] \cdot \frac{1}{\Phi''(M(y))}~.
\end{equation*}
As a result, we obtain
\begin{align*}
    T_{3}' &\leq \frac{\alpha \cdot (1 + \sfC^{-1}) \cdot \bar{L}^{2}}{2} \cdot \left\{\bbE_{y \sim \rho}\left[\bbE_{x \sim P_{X | Y = y}}[\Phi''(\psi(x, y)) \cdot \|\nabla_{1}\psi(x, y)\|^{2}\right]\right\} \\
    &= \frac{\alpha \cdot (1 + \sfC^{-1}) \cdot \bar{L}^{2}}{2} \cdot \bbE_{x \sim \mu}\left[\Phi''(\psi(x, y)) \cdot \|\nabla_{1}\psi(x, y)\|^{2}\right]~.\numberthis\label{eq:T3dash-bound-phisi}
\end{align*}

The remainder of the proof handles the specific cases for \(\Phi\) for \(T_{3}'\).
We remark that both choices of \(\Phi\) are of Legendre type and the range of \(\Phi'\) is \(\bbR\), which we use in the following development.
Note that if \(u\) satisfies \(\|\nabla M(y)\|^{2} = {\calQ_{u, y}}^{2}\), then \(-u\) also satisfies this equality.
Hence, without loss of generality,
\begin{equation*}
    \left\langle u, \bbE_{x \sim P_{X | Y = y}}\left[\nabla_{y}\log p_{X | Y = y}(x) \cdot \psi(x, y) \right]\right\rangle \geq 0
\end{equation*}
Using the fact that \(\bbE_{x \sim P_{X | Y = y}}[\nabla_{y}\log p_{X | Y = y}(x)] = 0\), we have for \(\lambda > 0\) that
\begin{align*}
    \left\langle u, \bbE_{x \sim P_{X | Y = y}}\left[\nabla_{y}\log  \!\!\right.\right. & \!\!\!\left.\left.\phantom{{}_{{}_{|}}} p_{X | Y = y}(x) \cdot\psi(x, y) \right]\right\rangle \\
    &= \bbE_{x \sim P_{X | Y = y}}[(h_{u, y}(x) - \bbE_{P_{X | Y = y}}[h_{u, y}]) \cdot \psi(x, y)] \\
    &= \bbE_{x \sim P_{X | Y = y}}\left[\left(\lambda \cdot h_{u, y}(x)- \bbE_{P_{X | Y= y}}[\lambda \cdot h_{u, y}]\right) \cdot \left\{\frac{\psi(x, y)}{\lambda}\right\} \right] \\
    &\leq \bbE_{x \sim P_{X | Y = y}}\left[\frac{\psi(x, y)}{\lambda}\right] \cdot \left(\Phi'(\bbE_{P_{X | Y =y}}[{\Phi^{\star}}'(\lambda \cdot h_{u, y})]) - \bbE_{P_{X | Y = y}}[\lambda \cdot h_{u, y}]\right) \\
    &\qquad + \Phient{x \sim P_{X | Y = y}}{\Phi}{\frac{\psi(x, y)}{\lambda}} + \Phient{P_{X | Y = y}}{\Phi^{\star}(\Phi')}{{\Phi^{\star}}'(\lambda \cdot h_{u, y})}~. 
\end{align*}
Recall the notation from \cref{eq:shorthand-not}.
The final inequality instantiates \cref{prop:product-phi-entropy} with \(f \leftarrow {\Phi^{\star}}'(\lambda \cdot h_{u, y})\) and \(g \leftarrow \frac{\psi}{\lambda}\).
Now we branch out based on \(\Phi\).

\paragraph{When \(\Phi(t) = t^{2},~\calS = \bbR\): }
In this setting,
\begin{equation*}
    \Phi'(t) = 2t; \quad \Phi^{\star}(t) = \frac{t^{2}}{4}; \quad {\Phi^{\star}}'(t) = \frac{t}{2}~.
\end{equation*}
Consequently,
\begin{equation*}
    \Phi'(\bbE_{P_{X | Y = y}}[{\Phi^{\star}}'(\lambda \cdot h_{u, y})]) = \bbE_{P_{X | Y = y}}[\lambda \cdot h_{u, y}]~; \quad \Phient{P_{X | Y = y}}{\Phi^{\star}(\Phi')}{{\Phi^{\star}}'(\lambda \cdot h_{u, y})} = \Phient{P_{X | Y = y}}{\Phi}{\frac{\lambda \cdot h_{u, y}}{2}}~.
\end{equation*}
This results in
\begin{align*}
    \left\langle u, \bbE_{x \sim P_{X | Y = y}}\left[\nabla_{y}\log p_{X | Y = y}(x) \cdot \psi(x, y) \right]\right\rangle &\leq \var{}_{P_{X | Y = y}}\left[\frac{\lambda \cdot h_{u, y}}{2}\right] + \var{}_{x \sim P_{X | Y = y}}\left[\frac{\psi(x, y)}{\lambda}\right] \\
    &\leq \frac{\lambda^{2}}{4} \cdot \var{}_{P_{X | Y = y}}[h_{u, y}] + \frac{1}{\lambda^{2}} \cdot \var{}_{x \sim P_{X | Y = y}}[\psi(x, y)]~.
\end{align*}
Since this bound holds for any \(\lambda > 0\), optimising for \(\lambda > 0\) yields
\begin{align*}
    \left\langle u, \bbE_{x \sim P_{X | Y = y}}\left[\nabla_{y}\log p_{X | Y = y}(x) \cdot \psi(x, y) \right]\right\rangle &\leq \sqrt{\var{}_{P_{X | Y = y}}[h_{u, y}] \cdot \var{}_{x \sim P_{X | Y =y}}[\psi(x, y)]} \\
    &\leq \sqrt{\beta \cdot \bbE_{x \sim P_{X | Y = y}}[\|\nabla_{1}\psi(x, y)\|^{2}] \cdot \bar{L}^{2}}~
\end{align*}
where the final step uses the assumption that \(P_{X | Y = y}\) satisfies \ref{eq:2scale-bvar} and \(P_{X | Y = y}\) satisfies \(\PI{\beta}\).
Therefore,
\begin{align*}
    T_{3}' &\leq \alpha \cdot (1 + \sfC^{-1}) \cdot \bbE_{y \sim \rho}\left[\beta \cdot \bbE_{x \sim P_{X | Y =y}}[\|\nabla_{1}\psi(x, y)\|^{2}] \cdot \bar{L}^{2}\right] \\
    &= \alpha \cdot (1 + \sfC^{-1}) \cdot \beta \cdot \bar{L}^{2} \cdot \bbE_{\nu}\left[\|\nabla_{1}\psi\|^{2}\right] \\
    &= \frac{\alpha \cdot (1 + \sfC^{-1}) \cdot \beta \cdot \bar{L}^{2}}{2} \cdot \bbE_{\nu}\left[\Phi''(\psi) \cdot \|\nabla_{1}\psi\|^{2}\right]~.\numberthis\label{eq:T3dash-bound-pi}
\end{align*}

\paragraph{When \(\Phi(t) = t\log(t),~\calS = (0, \infty)\): }
In this case,
\begin{equation*}
    \Phi'(t) = 1 + \log(t); \quad \Phi^{\star}(t) = e^{t - 1}; \quad {\Phi^{\star}}'(t) = e^{t- 1}~.
\end{equation*}
Consequently,
\begin{align*}
    \Phi'(\bbE_{P_{X | Y = y}}[{\Phi^{\star}}'(\lambda \cdot h_{u, y})]) - \bbE_{P_{X | Y = y}}[\lambda \cdot h_{u, y}] &= \log\left(\bbE_{P_{X | Y = y}}\left[\exp\left\{\lambda \cdot h_{u, y} - \bbE_{P_{X | Y = y}}[\lambda \cdot h_{u, y}]\right\}\right]\right)~;\\
    \Phient{P_{X | Y= y}}{\Phi^{\star}(\Phi')}{{\Phi^{\star}}'(\lambda \cdot h_{u, y})} &= 0~.
\end{align*}
This results in
\begin{align*}
    \left\langle u, \bbE_{x \sim P_{X | Y = y}}\left[\nabla_{y}\log  \!\!\right.\right. & \!\!\!\left.\left.\phantom{{}_{{}_{|}}} p_{X | Y = y}(x) \cdot\psi(x, y) \right]\right\rangle \\
    &\leq \ent{}_{x \sim P_{X | Y = y}}\left[\frac{\psi(x, y)}{\lambda}\right] \\
    &\quad + \bbE_{x \sim P_{X | Y = y}}\left[\frac{\psi(x, y)}{\lambda}\right] \cdot \log\left(\bbE_{P_{X | Y = y}}\left[\exp\left\{\lambda \cdot h_{u, y} - \bbE_{P_{X | Y = y}}[\lambda \cdot h_{u, y}]\right\}\right]\right) \\
    &\leq \frac{1}{\lambda} \cdot \ent{}_{x \sim P_{X | Y = y}}[\psi(x, y)] + \frac{\lambda}{2} \cdot M(y) \cdot \bar{L}^{2}~.
\end{align*}
The final inequality uses the fact that \(P_{X | Y = y}\) satisfies \ref{eq:2scale-bmgf}.
Since this holds for any \(\lambda\), optimising for \(\lambda > 0\) yields
\begin{align*}
    \left\langle u, \bbE_{x \sim P_{X | Y = y}}\left[\nabla_{y}\log p_{X | Y = y}(x) \cdot \psi(x, y) \right]\right\rangle &\leq \sqrt{2 \cdot \ent{}_{x \sim P_{X | Y = y}}[\psi(x, y)] \cdot M(y) \cdot \bar{L}^{2}} \\
    &\leq \sqrt{\beta \cdot \bbE_{x \sim P_{X | Y = y}}\left[\frac{\|\nabla_{1}\psi(x, y)\|^{2}}{\psi(x, y)}\right] \cdot M(y) \cdot \bar{L}^{2}}~.
\end{align*}
Therefore,
\begin{align*}
    T_{3}' &\leq \frac{\alpha \cdot (1 + \sfC^{-1})}{2} \cdot \bbE_{y \sim \rho}\left[\frac{1}{M(y)} \cdot \beta \cdot \bbE_{x \sim P_{X | Y = y}}\left[\frac{\|\nabla_{1}\psi(x, y)\|^{2}}{\psi(x, y)}\right] \cdot M(y) \cdot \bar{L}^{2}\right] \\
    &= \frac{\alpha \cdot (1 + \sfC^{-1}) \cdot \beta \cdot \bar{L}^{2}}{2} \cdot \bbE_{\nu}\left[\frac{\|\nabla_{1}\psi\|^{2}}{\psi}\right] \\
    &= \frac{\alpha \cdot (1 + \sfC^{-1}) \cdot \beta \cdot \bar{L}^{2}}{2} \cdot \bbE_{\nu}\left[\Phi''(\psi) \cdot \|\nabla_{1}\psi\|^{2}\right]~.\numberthis\label{eq:T3dash-bound-lsi}
\end{align*}

Substituting \cref{eq:T2dash-bound,eq:T3dash-bound-phisi} in \cref{eq:prelim-T1-T2-sum-bound-joint} and recognising that \(\|\nabla \psi\|^{2} = \|\nabla_{1}\psi\|^{2} + \|\nabla_{2}\psi\|^{2}\),
we get
\begin{align*}
    \Phient{\nu}{\Phi}{\psi} &\leq \left(\frac{\beta}{2} + \frac{\alpha \cdot (1 + \sfC^{-1}) \cdot \beta \cdot \bar{L}^{2}}{2}\right) \cdot \bbE_{\nu}[\Phi''(\psi) \cdot \|\nabla_{1}\psi\|^{2}] + \frac{\alpha \cdot (1 + \sfC)}{2} \cdot \bbE_{\nu}[\Phi''(\psi) \cdot \|\nabla_{2}\psi\|^{2}]~\\
    &\leq \frac{\max\left\{\beta + \alpha \cdot (1 + \sfC^{-1}) \cdot \bar{L}^{2} \cdot \beta,~\alpha \cdot (1 + \sfC) \right\}}{2} \cdot \bbE_{\nu}\left[\Phi''(\psi) \cdot \|\nabla \psi\|^{2}\right]~
\end{align*}
To eliminate \(\sfC >0\), we consider the tighest bound, which can be obtained by setting
\begin{equation*}
    \beta + \alpha \cdot (1 + \sfC^{-1}) \cdot \bar{L}^{2} \cdot \beta = \alpha \cdot (1 + \sfC)~,
\end{equation*}
and in conclusion yields
\begin{equation*}
    \bbJ_{\nu}^{\Phi}[\psi] \leq \frac{\zeta}{2}\cdot \bbE_{\nu}[\Phi''(\psi) \cdot \|\nabla \psi\|^{2}]~
\end{equation*}
for
\begin{equation*}
    \zeta(\alpha, \beta, \bar{L}) = \frac{1}{2}\left(\alpha + \beta + \alpha \cdot \bar{L}^{2} + \sqrt{4\alpha^{2} \cdot \bar{L}^{2} + (\beta - \alpha + \alpha \cdot \bar{L}^{2})^{2}}\right)~.
\end{equation*}
Note that \cref{eq:T3dash-bound-lsi} and \cref{eq:T3dash-bound-pi} are instances of \cref{eq:T3dash-bound-phisi} with \(\bar{L} \leftarrow \sqrt{\beta} \cdot \bar{L}\), so the proof follows.
\end{proof}

\subsubsection{Proof of \cref{thm:Phi-general-mixture}}
\label{sec:prf:mixture}

\begin{proof}
As done in the previous proof, we make the assumptions that
\begin{equation*}
    P_{X | Y = y} \text{ satisfies } \PhiSI{\beta} ~\forall ~ y~;\qquad \rho \text{ satisfies } \PhiSI{\alpha}~.
\end{equation*}
Additionally, we assume that \(\Phi\) is of Legendre type such that \(\frac{1}{\Phi''}\) is concave, and that the range of \(\Phi'\) is \(\bbR\).
This is done to streamline the proofs.

We would like to show that
\begin{equation*}
    \Phient{\nu}{\Phi}{\psi} \leq \frac{\xi}{2} \cdot \bbE_{\nu}[\Phi''(\psi) \cdot \|\nabla \psi\|^{2}]~,
\end{equation*}
where \(\xi\) is defined in \cref{thm:Phi-general-mixture}.
We begin with \cref{prop:decomp-phient}, which gives us
\begin{equation*}
    \Phient{\mu}{\Phi}{\psi} = \underbrace{\bbE_{y \sim \rho}\left[\Phient{x \sim P_{X | Y = y}}{\Phi}{\psi(x)}\right]}_{T_{1}} + \underbrace{\Phient{y \sim \rho}{\Phi}{\bbE_{x \sim P_{X | Y = y}}[\psi(x)]}}_{T_{2}}~.
\end{equation*}
Much of this proof resembles the previous one, and we include details for the sake of completeness.

\paragraph{Bounding \(T_{1}\)}
Since \(P_{X | Y = y}\) satisfies \(\PhiSI{\beta}\) for all \(y\), we have for any \(y\) that
\begin{align*}
    \Phient{x \sim P_{X | Y =y}}{\Phi}{\psi(x)} &\leq \frac{\beta}{2} \cdot \bbE_{x \sim P_{X | Y = y}}[\Phi''(\psi(x)) \cdot \|\nabla\psi(x)\|^{2}]~ \\
    \Rightarrow T_{1} &\leq \frac{\beta}{2} \cdot \bbE_{\mu}\left[\Phi''(\psi) \cdot \|\nabla\psi\|^{2}\right]~.
\end{align*}

\paragraph{Bounding \(T_{2}\)}
For convenience, we again consider the following notation
\begin{equation*}
    M(y) \equiv \bbE_{x \sim P_{X | Y = y}}[\psi(x)]~; \quad \calQ_{u, y} \equiv \langle u, \nabla M(y)\rangle~; \quad V_{y} \equiv \nabla_{2}G(., y)~; \quad h_{u, y} \equiv \langle u, V_{y}\rangle~.
\end{equation*}
Since \(\rho\) satisfies \(\PhiSI{\alpha}\), we have
\begin{equation*}
    \Phient{y \sim \rho}{\Phi}{M(y)} \leq \frac{\alpha}{2} \cdot \bbE_{y \sim \rho}[\Phi''(M(y)) \cdot \|\nabla M(y)\|^{2}]~.
\end{equation*}

Handling this term in the same way as handling \(T_{3}'\) in the preceding proof in \Cref{sec:prf:joint}, albeit with \(\psi(x)\) in lieu of \(\psi(x, y)\), we have
\begin{align*}
    \Phient{\mu}{\Phi}{\psi} &\leq \frac{\beta}{2} \cdot \bbE_{\mu}[\Phi''(\psi) \cdot \|\nabla\psi\|^{2}] + \frac{\alpha \cdot \bar{L}^{2}}{2} \cdot \bbE_{\mu}[\Phi''(\psi) \cdot \|\nabla\psi\|^{2}] \\
    &= \frac{\beta + \alpha \cdot \bar{L}^{2}}{2} \cdot \bbE_{\mu}[\Phi''(\psi) \cdot \|\nabla\psi\|^{2}]~.
\end{align*}
\end{proof}

\subsubsection{Proof of \cref{lem:conv-product-measure}}
\label{sec:prf:conv-product-measure}

\begin{proof}

For any \(\Phi\) such that \(\frac{1}{\Phi''}\) is concave, we have from \cref{eq:prelim-T1-T2-sum-bound-joint} and \cref{eq:T2dash-bound} in the proof of \cref{thm:Phi-general-joint} that
\begin{equation*}
    \Phient{\nu}{\Phi}{\psi} \leq \frac{\beta}{2} \cdot \bbE_{\nu}[\Phi''(\psi) \cdot \|\nabla_{1}\psi\|^{2}] + \frac{\alpha}{2} \cdot \bbE_{y \sim \rho}[\Phi''(\psi) \cdot \|\nabla_{2}\psi\|^{2}]~.
\end{equation*}
For general bivariate functions \(\psi\), we have \(
\|\nabla \psi\|^{2} = \|\nabla_{1}\psi\|^{2} + \nabla_{2}\psi\|^{2}\), which gives
\begin{equation*}
    \Phient{\nu}{\Phi}{\psi} \leq \frac{\max\{\alpha, \beta\}}{2} \cdot \bbE_{\nu}[\Phi''(\psi) \cdot \|\nabla \psi\|^{2}]~.
\end{equation*}
This proves the first part of the lemma.
On the other hand, when \(\psi(x, y) \leftarrow f(x + y)\), it holds that \(\|\nabla_{1} \psi(x, y)\|^{2} = \|\nabla_{2} \psi(x, y)\|^{2} = \|\nabla f(x + y)\|^{2}\), and hence
\begin{equation*}
    \Phient{(x, y) \sim \nu}{\Phi}{f(x + y)} \leq \frac{\alpha + \beta}{2} \cdot \bbE_{(x, y) \sim \nu}[\Phi''(f(x + y)) \cdot \|\nabla f(x + y)\|^{2}]~.
\end{equation*}
To see the relevance to the second part of the lemma, we have for any measurable function \(g\) that \(\bbE_{z \sim \rho \ast P}[g(z)] = \bbE_{(x, y) \sim \nu}[g(x + y)]\).
This yields
\begin{equation*}
    \Phient{z \sim \rho \ast P}{\Phi}{f(z)} \leq \frac{\alpha + \beta}{2} \cdot \bbE_{z \sim \rho \ast P}\left[\Phi''(f(z)) \cdot \|\nabla f(z)\|^{2}\right]
\end{equation*}
which completes the proof for the second result of the lemma.
\end{proof}

\subsection{Proofs for \cref{prop:expected-value-kernel-test-grad,prop:decomp-phient,prop:product-phi-entropy}}
\label{sec:proofs-prop-proofs}

\subsubsection{Proof of \cref{prop:expected-value-kernel-test-grad}}
\begin{proof}
We start with the definition of \(M(y)\),
\begin{align*}
    \nabla M(y) &= \nabla_{y} \int p_{X | Y = y}(x) \psi(x, y) \rmd x \\
    &= \int \left(\nabla_{y} p_{X | Y = y}(x) \cdot \psi(x, y) + p_{X | Y = y}(x) \cdot \nabla_{2}\psi(x, y) \right)\rmd x \\
    &= \int \left(\nabla_{y} \log p_{X | Y = y}(x) \cdot \psi(x, y) + \nabla_{2}\psi(x, y)\right) p_{X | Y = y}(x) \rmd x \\
    &= \bbE_{x \sim P_{X | Y =y}}[\nabla_{y}\log p_{X | Y = y}(x) \cdot \psi(x, y))] + \bbE_{x \sim P_{X | Y =y}}[\nabla_{2}\psi(x, y)]~.
\end{align*}
\end{proof}

\subsubsection{Proof of \cref{prop:decomp-phient}}
\begin{proof}
By definition,
\begin{align*}
    \Phient{\nu}{\Phi}{\psi} &= \bbE_{y \sim \rho}\left[\bbE_{x \sim P_{X | Y = y}}[\Phi(\psi(x, y))]\right] - \Phi\left(\bbE_{y \sim \rho}\left[\bbE_{P_{X | Y = y}}[\psi(x, y)]\right]\right) \\
    &= \bbE_{y \sim \rho}\left[\bbE_{x \sim P_{X | Y = y}}[\Phi(\psi(x, y))]\right] - \bbE_{y \sim \rho}\left[ \Phi\left(\bbE_{x \sim P_{X | Y = y}}[\psi(x, y)]\right)\right] \\
    &\qquad + \bbE_{y \sim \rho}\left[ \Phi\left(\bbE_{x \sim P_{X | Y = y}}[\psi(x, y)]\right)\right] - \Phi\left(\bbE_{y \sim \rho}\left[\bbE_{P_{X | Y = y}}[\psi(x, y)]\right]\right) \\
    &= \bbE_{y \sim \rho}\left[\left\{\bbE_{x \sim P_{X | Y = y}}[\Phi(\psi(x, y))] - \Phi(\bbE_{x \sim P_{X | Y = y}}[\psi(x, y)])\right\}\right] + \Phient{y \sim \rho}{\Phi}{\bbE_{x \sim P_{X | Y = y}}[\psi(x, y)]} \\
    &= \bbE_{y \sim \rho}\left[\Phient{x \sim P_{X | Y = y}}{\Phi}{\psi(x, y)}\right] + \Phient{y \sim \rho}{\Phi}{\bbE_{x \sim P_{X | Y = y}}[\psi(x, y)]}~.
\end{align*}

For the second part of the proposition, note that
\begin{equation*}
    \Phient{\nu}{\Phi}{\psi} = \bbE_{(x, y) \sim \nu}[\Phi(\psi(x))] - \Phi(\bbE_{(x, y) \sim \nu}[\psi(x)]) = \bbE_{x \sim \mu}[\Phi(\psi(x))] - \Phi(\bbE_{\mu}[\psi])~,
\end{equation*}
since the \(y\)-marginal of \(\nu\) is \(\mu\).
\end{proof}

\subsubsection{Proof of \cref{prop:product-phi-entropy}}
\begin{proof}
From \citet[Prop. 4]{chafai2004entropies}, we have by the assumptions of this proposition that
\begin{equation*}
    \Phient{\pi}{\Phi}{f_{2}} \geq \Phient{\pi}{\Phi}{f_{1}} + \bbE_{\pi}\left[(\Phi'(f_{1}) - \Phi'(\bbE_{\pi}[f_{1}])) \cdot (f_{2} - f_{1})\right]~.
\end{equation*}

We write \(\Phi'(f_{1}) - \Phi'(\bbE_{\pi}[f_{1}])\) as \(\Phi'(f_{1}) - \bbE_{\pi}[\Phi'(f_{1})] + \bbE_{\pi}[\Phi'(f_{1})] - \Phi'(\bbE_{\pi}[f_{1}])\), and expand the second term in the RHS above.
\begin{align*}
    \bbE_{\pi}\left[(\Phi'(f_{1}) - \Phi'(\bbE_{\pi}[f_{1}])) \cdot (f_{2} - f_{1})\right] &= \bbE_{\pi}[(\Phi'(f_{1}) - \bbE_{\pi}[\Phi'(f_{1})]) \cdot (f_{2} - f_{1})] \\
    &\qquad - \bbE_{\pi}[f_{2} - f_{1}] \cdot (\Phi'(\bbE_{\pi}[f_{1}]) - \bbE_{\pi}[\Phi'(f_{1})]) \\
    &= \bbE_{\pi}[(\Phi'(f_{1}) - \bbE_{\pi}[\Phi'(f_{1})]) \cdot f_{2}] \\
    &\quad - \bbE_{\pi}[f_{2}] \cdot (\Phi'(\bbE_{\pi}[f_{1}]) - \bbE_{\pi}[\Phi'(f_{1})]) \\
    &\qquad - \bbE_{\pi}[\Phi'(f_{1}) \cdot f_{1}] + \bbE_{\pi}[f_{1}] \cdot \Phi'(\bbE_{\pi}[f_{1}])~.\numberthis\label{eq:var-form-second-term}
\end{align*}

Instantiating the identity in \cref{eq:conv-conj-identity} for the last two terms in \cref{eq:var-form-second-term}, we get
\begin{align*}
    \bbE_{\pi}[f_{1}] \cdot \Phi'(\bbE_{\pi}[f_{1}]) &= \Phi(\bbE_{\pi}[f_{1}]) + \Phi^{\star}(\Phi'(\bbE_{\pi}[f_{1}])) \\
    \bbE_{\pi}[\Phi'(f_{1}) \cdot f_{1}] &= \bbE_{\pi}[\Phi(f_{1})] + \bbE_{\pi}[\Phi^{\star}(\Phi'(f_{1}))]
\end{align*}
This results in
\begin{align*}
    \bbE_{\pi}[f_{1}] \cdot \Phi'(\bbE_{\pi}[f_{1}]) - \bbE_{\pi}[\Phi'(f_{1}) \cdot f_{1}] &= \Phi(\bbE_{\pi}[f_{1}]) - \bbE_{\pi}[\Phi(f_{1})] + \Phi^{\star}(\Phi'(\bbE_{\pi}[f_{1}])) - \bbE_{\pi}[\Phi^{\star}(\Phi'(f_{1}))]  \\
    &= -\Phient{\pi}{\Phi}{f_{1}} + \Phi^{\star}(\Phi'(\bbE_{\pi}[f_{1}])) - \bbE_{\pi}[\Phi^{\star}(\Phi'(f_{1}))] \\
    &= -\Phient{\pi}{\Phi}{f_{1}} - \Phient{\pi}{\Phi^{\star}(\Phi')}{f_{1}}~.\numberthis\label{eq:conv-conj-simplify}
\end{align*}

Substituting \cref{eq:var-form-second-term,eq:conv-conj-simplify} in the inequality due to \citeauthor{chafai2004entropies}, we obtain that
\begin{equation*}
    \Phient{\pi}{\Phi}{f_{2}} \geq \bbE_{\pi}[(\Phi'(f_{1}) - \bbE_{\pi}[\Phi'(f_{1})]) \cdot f_{2}] -  \bbE_{\pi}[f_{2}] \cdot (\Phi'(\bbE_{\pi}[f_{1}]) - \bbE_{\pi}[\Phi'(f_{1})]) - \Phient{\pi}{\Phi^{\star}(\Phi')}{f_{1}}~,
\end{equation*}
which when rearranged gives the statement of the proposition.
\end{proof}

\subsection{Proofs for the results in \Cref{sec:applications}}

\subsubsection{Proof of \Cref{thm:ricci-curvature}}
\label{sec:prf:ricci-curvature}
\begin{proof}
Let \(g\) be an arbitrary \(1\)-Lipschitz continuous function.
Define \(y_{t} = y_{1} + t \cdot (y_{2} - y_{1})\) for \(t \in [0, 1]\).
Then
\begin{equation}
\label{eq:mvt-cond}
    p_{X | Y = y_{1}}(x) - p_{X | Y = y_{2}}(x) = \int_{0}^{1} \langle \nabla_{y} p_{X | Y = y_{\tilde{t}}}(x), y_{2} - y_{1}\rangle \rmd t~.
\end{equation}
This results in
\begin{align*}
    \left|\int g(x) p_{X | Y = y_{1}}(x) \rmd x \right.&\left.- \int g(x) p_{X | Y = y_{2}}(x) \rmd x\right| \\
    &= \left|\int \int_{0}^{1} g(x) \langle \nabla_{y} p_{X | Y = y_{t}}(x), y_{2} - y_{1}\rangle \rmd t ~\rmd x\right|\\
    &\overset{(a)}= \left|\int_{0}^{1} \left\langle y_{1} - y_{2}, \int \nabla_{y} p_{X | Y = y_{t}}(x) g(x)\rmd x \right\rangle \rmd t\right| \\
    &\overset{(b)}\leq \int_{0}^{1}\left| \left\langle y_{1} - y_{2}, \int \nabla_{y} p_{X | Y = y_{t}}(x) g(x)\rmd x \right\rangle \right| \rmd t \\
    &\overset{(c)}= \int_{0}^{1} \left| \langle y_{1} - y_{2}, \bbE_{\sfx \sim P_{X | Y = y_{t}}}\left[g(\sfx) \cdot \nabla_{y} \log p_{X | Y = y_{t}}(\sfx)\right]\right| \rmd t \\
    &\overset{(d)}\leq \int_{0}^{1} \bar{L} \cdot \sqrt{\bbE_{\sfx \sim P_{X | Y = y_{t}}}[\|\nabla g(\sfx)\|^{2}]} \cdot \|y_{1} - y_{2}\| \rmd t \\
    &\overset{(e)}\leq \bar{L} \cdot \|y_{1} - y_{2}\|~.
\end{align*}
The sequence of steps \((a)-(e)\) are justified in order as follows: Tonelli's theorem for interchanging the order of integration, Jensen's inequality for moving the absolute value inside the integral, using chain rule to express the quantity in terms of the LHS in \ref{eq:2scale-exc}, application of \ref{eq:2scale-exc}, and finally noting that \(g\) is \(1\)-Lipschitz continuous.

Since this holds for an arbitrary \(1\)-Lipschitz continuous function, we have from \cref{prop:W1-dual} that 
\begin{equation*}
    \sfW_{1}(p_{X | Y = y_{1}}, p_{X | Y = y_{2}}) \leq \bar{L} \cdot \|y_{1} - y_{2}\|~.
\end{equation*}
The statement of the lemma is thus obtained by definition.
\end{proof}

\subsubsection{Proof of \Cref{lem:conv-error-bound}}
\label{sec:prf:conv-error-bound}

\begin{proof}
We show the results for schemes 1 and 2 sequentially.
Suppose \(\sfX\) is obtained after \(M\) transitions with \(\bfP\) from initial distribution \(\rho_{0}\).
Then \(\sfX \sim \rho_{M}\), where \(\rho_{M}\) is a mixture of \(\bfP\) with mixing distribution \(\rho_{M - 1}\).
Since \(\bar{L} < 1\), we know from \cref{prop:w1-contract} that \(\rho^{\star}\) is stationary, and hence a mixture of \(\bfP\) with mixing distribution \(\rho^{\star}\) coincides with \(\rho^{\star}\).
As a result, from \cref{prop:W1-dual},
\begin{align*}
    \left|\bbE[f(\sfX)] - F^{\star}(f; \rho^{\star})\right| &= \left|F^{\star}(f; \rho_{M}) - F^{\star}(f; \rho^{\star})\right| \\
    &\leq L \cdot \mathsf{W}_{1}(\rho_{M}, \rho^{\star}) \\
    &\leq L \cdot \bar{L} \cdot \mathsf{W}_{1}(\rho_{M - 1}, \rho^{\star}) \\
    &\leq L \cdot \bar{L}^{M} \cdot \mathsf{W}_{1}(\rho_{0}, \rho^{\star})
\end{align*}
where in the penultimate inequality, we use \cref{prop:w1-contract} and the last inequality, we recurse the inequality.
We use this intermediate result to bound the errors for Scheme 1 and 2 below.

\paragraph{Scheme 1}
Note that each \(x \in \calS_{N}\) is iid. according to \(\rho_{J}\).
By linearity of expectation,
\begin{equation*}
    \left|\bbE[\widehat{F}(f; \calS_{N})] - F^{\star}(f; \rho^{\star})\right| = \left|\frac{1}{N}\sum_{i=1}^{N}\left\{F^{\star}(f; \rho_{J}) - F^{\star}(f; \rho^{\star})\right\}\right| \leq L \cdot \bar{L}^{J} \cdot \mathsf{W}_{1}(\rho_{0}, \rho^{\star})~.
\end{equation*}

\paragraph{Scheme 2}
The first sample in \(\calS_{N}\) is distributed according to \(\rho_{J}\), and the \(i^{th}\) sample after that is distribution according to \(\rho_{J + (i - 1)K}\).
By the linearity of expectation,
\begin{align*}
    \left|\bbE[\widehat{F}(f; \calS_{N})] - F^{\star}(f; \rho^{\star})\right| &= \left|\frac{1}{N}\sum_{i=1}^{N} \left\{F^{\star}(f; \rho_{J + (i - 1)K}) - F^{\star}(f; \rho^{\star})\right\}\right| \\
    &\leq \frac{1}{N}\sum_{i=1}^{N}\left|F^{\star}(f; \rho_{J + (i - 1)K}) - F^{\star}(f; \rho^{\star})\right| \\
    &\leq \frac{L \cdot \bar{L}^{J} \cdot \mathsf{W}_{1}(\rho_{0}, \rho^{\star})}{N}\sum_{i=1}^{N} \bar{L}^{(i - 1)K}~.
\end{align*}
\end{proof}

\subsubsection{Proof of \Cref{lem:scheme1-chernoff}}
\label{sec:prf:scheme1-chernoff}

\begin{proof}
Since the samples in \(\calS_{N}\) are independent, we have
\begin{align*}
    \bbE\left[\exp\left(\lambda \cdot \left\{\widehat{F}(f, \calS_{N}) - \bbE\left[\widehat{F}(f, \calS_{N})\right]\right\}\right)\right] &= \prod_{i=1}^{N} \bbE\left[\exp\left(\frac{\lambda}{N} \cdot \left\{ f(x^{(i)}) - \bbE\left[f(x^{(i)})\right]\right\}\right)\right] \\
    &= \prod_{i=1}^{N} \bbE\left[\exp\left(\frac{\lambda \cdot L}{N} \cdot \frac{1}{L}\left\{ f(x^{(i)}) - \bbE\left[f(x^{(i)})\right]\right\}\right)\right]
\end{align*}
Since each \(x^{(i)}\) for \(i \in [N]\) is obtained by running the Markov chain for \(J\) iterations from \(\rho_{0}\) that satisfies \(\PhiSI{\alpha_{0}}\), the law of \(x^{(i)}\) for every \(i \in [N]\) satisfies \(\PhiSI{\alpha_{J}}\)
where
\begin{equation*}
    \alpha_{J} = \beta \sum_{i=0}^{J - 1} \bar{L}^{2i} + \alpha_{0} \bar{L}^{2J}
\end{equation*}
which is a consequence of \cref{thm:Phi-general-mixture}.
As a result, when \(\frac{\lambda \cdot L}{N} < \Lambda\),
\begin{equation*}
    \prod_{i=1}^{N} \bbE\left[\exp\left(\frac{\lambda \cdot L}{N} \cdot \frac{1}{L}\left\{ f(x^{(i)}) - \bbE\left[f(x^{(i)})\right]\right\}\right)\right] \leq \varphi_{\Phi}\left(\frac{\lambda \cdot L}{N}; \alpha_{J}\right)^{N}
\end{equation*}
Therefore,
\begin{equation*}
    \bbP\left(\widehat{F}(f; \calS_{N}) - \bbE[\widehat{F}(f; \calS_{N})] > t\right) \leq \inf_{\lambda : \lambda < \frac{\Lambda N}{L}} \varphi_{\Phi}\left(\frac{\lambda \cdot L}{N}; \alpha_{J}\right)^{N} \exp\left(-\lambda t\right)~.
\end{equation*}

We instantiate this with the choices of \(\varphi_{\Phi}\) and \(\Lambda\) highlighted in \cref{assump:mgf}.
\begin{description}
\item [\(\Phi(t) = t\log(t)\): ] the infimum on the right side above turns into
\begin{equation*}
    \inf_{\lambda > 0} \exp\left(\frac{\alpha_{J}\lambda^{2}L^{2}}{2N} - \lambda t\right) = \exp\left(-\frac{Nt^{2}}{2\alpha_{J}L^{2}}\right)
\end{equation*}
since the minimum of \(\frac{\alpha_{J}\lambda^{2}L^{2}}{2N} - \lambda t\) over \(\lambda > 0\) is \(-\frac{Nt^{2}}{2\alpha_{J}L^{2}}\).

\item [\(\Phi(t) = t^{\nicefrac{2}{p}}\): ] this is a more involved algebraic calculation.
First, by substituting the definition of \(\varphi_{\Phi}\) in this setting, we have
\begin{equation*}
    \varphi_{\Phi}\left(\frac{\lambda \cdot L}{N}; \alpha_{J}\right)^{N} \exp(-\lambda t) = \exp\left(-\lambda t - \frac{2N}{2 - p}\log\left(1 - \frac{\alpha_{J}\lambda^{2}L^{2}(2 - p)}{4N^{2}}\right)\right)~.
\end{equation*}
Taking the derivative w.r.t. \(\lambda\) and setting it to \(0\) (without considering constraints), we have
\begin{equation*}
    -t + \left(1 - \frac{\alpha_{J}\lambda^{2}L^{2}(2 - p)}{4N^{2}}\right)^{-1}\cdot \frac{\alpha_{J}\lambda L}{N} =0~.
\end{equation*}
Solving this quadratic yields
\begin{equation*}
    \lambda^{\star} = \frac{2N}{(2 - p)L}\left(\sqrt{\frac{1}{t^{2}} + \frac{2 - p}{\alpha_{J}}} - \frac{1}{t}\right)~.
\end{equation*}
By the subadditivity of \(t \mapsto \sqrt{t}\), we also know that \(\lambda^{\star} < \frac{2N}{L\sqrt{\alpha_{J}(2 - p)}}\), respecting the constraint.
Substituting this in the function we sought to minimise, we have for \(\bar{\alpha}_{J}(t; p) := \sqrt{1 + \frac{t^{2}(2 - p)}{\alpha_{J}L^{2}}}\)
\begin{equation*}
    \inf_{\lambda < \Lambda}  \varphi_{\Phi}\left(\frac{\lambda}{N}; \alpha_{J}\right)^{N} \exp(-\lambda t) = \exp\left(-\frac{2N}{2 - p}\left\{\bar{\alpha}_{J}(t; p) - 1 - \log(\bar{\alpha}_{J}(t; p) + 1) + \log(2)\right\}\right)~.
\end{equation*}
\end{description}
\end{proof}

\subsubsection{Proof of \Cref{lem:scheme2-chernoff}}
\label{sec:prf:scheme2-chernoff}

\begin{proof}
Here, we have a dependency between samples.
However, due to the Markovian nature of the collection process, the sample \(x^{(i)}\) only depends on \(x^{(i - 1)}\) for any \(i \geq 2\).

For convenience, let \(\calF_{n}\) represents the random variables \(\{x^{(i)}\}_{i=1}^{n}\), and define 
\begin{align*}
    \calP_{n} &:= \bbE_{\calF_{n}}\left[\exp\left(\frac{\lambda \cdot L}{N} \cdot \frac{1}{L} \cdot \sum_{i=1}^{n}\left\{ f(x^{(i)}) - \bbE_{\calF_{N}}[f(x^{(i)})]\right\}\right)\right]~, \\
    \calA_{n} &:= \bbE_{\calF_{n - 1}}\left[\exp\left(\frac{\lambda}{N} \cdot \left\{\bbE_{x^{(n)} | \calF_{n - 1}}[f(x^{(n)})] - \bbE_{\calF_{n}}[f(x^{(n)})]\right\}\right)\right] \\
    &= \bbE_{\calF_{n - 1}}\left[\exp\left(\frac{\lambda}{N} \cdot \left\{\bbE_{x^{(n)} | \calF_{n - 1}}[f(x^{(n)})] - \bbE_{\calF_{n - 1}}\left[\bbE_{x^{(n)} | \calF_{n - 1}}[f(x^{(n)})]\right]\right\}\right)\right] \\
    &= \bbE_{x^{(n - 1)}}\left[\exp\left(\frac{\lambda}{N} \cdot \left\{\bbE_{x^{(n)} | x^{(n - 1)}}[f(x^{(n)})] - \bbE_{x^{(n - 1)}}\left[\bbE_{x^{(n)} | x^{(n - 1)}}[f(x^{(n)})]\right]\right\}\right)\right]~.
\end{align*}
The last equality in \(\calA_{n}\) stems from the fact that \(x^{(n)}\) is only dependent on \(x^{(n - 1)}\), and \(\bbE_{x^{(n)}}\) represents the marginal law of \(x^{(n - 1)}\).
This leads to the following mathematical observation
\begin{align*}
   \calP_{N} &= \bbE_{\calF_{N}}\left[\exp\left(\frac{\lambda \cdot L}{N} \cdot \frac{1}{L}\sum_{i=1}^{N}\left\{ f(x^{(i)}) - \bbE_{\calF_{N}}[f(x^{(i)})]\right\}\right)\right] \\
   &= \underbrace{\bbE_{\calF_{N - 1}}\left[\bbE_{x^{(N)} | \calF_{N - 1}}\left[\exp\left(\frac{\lambda \cdot L}{N} \cdot \frac{1}{L}\left\{f(x^{(N)}) - \bbE_{\calF_{N}}[f(x^{(N)}]\right\}\right)\right]\right]}_{T_{N}} \cdot \calP_{N - 1}~.
\end{align*}

We now work with the first term.
Adding and subtracting \(\bbE_{x^{(N)} | \calF_{N - 1}}[f(x^{(N)})]\), we get
\begin{align*}
    T_{N} &= \bbE_{\calF_{N - 1}}\left[\bbE_{x^{(N)} | \calF_{N - 1}}\left[\exp\left(\frac{\lambda \cdot L}{N} \cdot \frac{1}{L}\left\{f(x^{(N)}) - \bbE_{x^{(N)} | \calF_{N - 1}}[f(x^{(N)})]\right\}\right)\right]\right] \\
    &\qquad \cdot \bbE_{\calF_{N - 1}}\left[\exp\left(\frac{\lambda}{N} \cdot \left\{\bbE_{x^{(N)} | \calF_{N - 1}}[f(x^{(N)})] - \bbE_{\calF_{N}}[f(x^{(N)})]\right\}\right)\right] \\
    &= \bbE_{x^{(N - 1)}}\left[\bbE_{x^{(N)} | x^{(N - 1)}}\left[\exp\left(\frac{\lambda \cdot L}{N} \cdot \frac{1}{L}\left\{f(x^{(N)}) - \bbE_{x^{(N)} | x^{(N - 1)}}[f(x^{(N)})]\right\}\right)\right]\right] \\
    &\qquad \cdot \bbE_{\calF_{N - 1}}\left[\exp\left(\frac{\lambda}{N} \cdot \left\{\bbE_{x^{(N)} | \calF_{N - 1}}[f(x^{(N)})] - \bbE_{\calF_{N}}[f(x^{(N)})]\right\}\right)\right] \\
    &\leq \varphi_{\Phi}\left(\frac{\lambda \cdot L}{N}; \beta\right) \cdot \calA_{N}~.
\end{align*}
The inequality uses the facts that \(x^{(N)} | x^{(N - 1)}\) is distributed according to \(p_{x | Y = x^{(N - 1)}}\), which is assumed to satisfy \(\PhiSI{\beta}\), and \cref{assump:mgf}.
Note that this assumes \(\frac{\lambda \cdot L}{N} < \Lambda\).
Recursing this inequality, we get
\begin{equation*}
    \calP_{N} \leq \varphi_{\Phi}\left(\frac{\lambda \cdot L}{N}; \beta\right)^{N - 1} \cdot \prod_{i=2}^{N} \calA_{i} \cdot \calP_{1}~.
\end{equation*}
The rest of the proof focuses on simplifying the quantities \(\{\calA_{i}\}_{i=2}^{N}\) and \(\calP_{1}\).
First, by \cref{thm:Phi-general-mixture}, the law of \(x^{(i)}\) satisfies \(\PhiSI{\alpha_{J + (i - 1) K}}\) where
\begin{align*}
    \alpha_{\ell} = \beta \sum_{i=0}^{\ell - 1} \bar{L}^{2i} + \alpha_{0}\bar{L}^{2\ell} \qquad \forall ~\ell \geq 0~.
\end{align*}
This is because \(x^{(i)}\) is obtained after \(J + (i -1)K\) simulations of the Markov chain.
Since \(f\) is \(L\)-Lipschitz continuous
\begin{equation*}
    \calP_{1} = \bbE_{x^{(1)}}\left[\exp\left(\frac{\lambda \cdot L}{N} \cdot \frac{1}{L} \left\{f(x^{(1)}) - \bbE_{x^{(1)}}\left[f(x^{(1)})\right]\right\}\right)\right] \leq \varphi_{\Phi}\left(\frac{\lambda \cdot L}{N}; \alpha_{J}\right)~.
\end{equation*}
Second, the function \(y \mapsto \bbE_{\sfx \sim P_{X | Y  =y}}[f(\sfx)]\) is \((\bar{L} \cdot L)\)-Lipschitz continuous.
This is a consequence of \Cref{thm:ricci-curvature} and \cref{prop:W1-dual}, since
\begin{align*}
    \left|\bbE_{\sfx \sim P_{X | Y = y_{1}}}[f(\sfx)] - \bbE_{\sfx \sim P_{X | Y = y_{2}}}[f(\sfx)]\right| &\leq L \cdot \sfW_{1}(P_{X | Y = y_{1}}, P_{X | Y = y_{2}}) \\
    &\leq L \cdot (1 - \kappa(y_{1}, y_{2})) \|y_{1} - y_{2}\| \\
    &\leq \bar{L} \cdot L \cdot \|y_{1} - y_{2}\|~.
\end{align*}
From this, we obtain for any \(i \geq 2\) that
\begin{align*}
    \calA_{i} &= \bbE_{x^{(i - 1)}}\left[\exp\left(\frac{\lambda}{N} \cdot \left\{\bbE_{x^{(i)} | x^{(i - 1)}}[f(x^{(i)})] - \bbE_{x^{(i - 1)}}\left[\bbE_{x^{(i)} | x^{(i - 1)}}[f(x^{(i)})]\right]\right\}\right)\right] \\
    &= \bbE_{x^{(i - 1)}}\left[\exp\left(\frac{\lambda \cdot L \cdot \bar{L}}{N} \cdot \frac{1}{L \cdot \bar{L}} \left\{\bbE_{x^{(i)} | x^{(i - 1)}}[f(x^{(i)})] - \bbE_{x^{(i - 1)}}\left[\bbE_{x^{(i)} | x^{(i - 1)}}[f(x^{(i)})]\right]\right\}\right)\right] \\
    &\leq \varphi_{\Phi}\left(\frac{\lambda \cdot L \cdot \bar{L}}{N}; \alpha_{J + (i - 2)K}\right)
\end{align*}
where we use the fact that \(x^{(i -1)}\) marginally satisfies \(\PhiSI{\alpha_{J + (i - 2)K}}\) as remarked earlier.
Here, we place the implicit assumption that \(\frac{\lambda \cdot L \cdot \bar{L}}{N} < \Lambda\), which is satisfied by the previous assumption \(\frac{\lambda \cdot L}{N} < \Lambda\) since \(\bar{L} < 1\).
From this, we get the following bound for the product of \(\calA_{i}\) terms as
\begin{equation*}
    \prod_{i=2}^{N} \calA_{i} \leq \prod_{i=2}^{N} \varphi_{\Phi}\left(\frac{\lambda \cdot L \cdot \bar{L}}{N}; \alpha_{J + (i - 2)K}\right)~.
\end{equation*}
This results in the final bound
\begin{equation*}
    \calP_{N} \leq \varphi_{\Phi}\left(\frac{\lambda \cdot L}{N}; \beta\right)^{N - 1} \cdot \varphi_{\Phi}\left(\frac{\lambda \cdot L}{N}; \alpha_{J}\right) \cdot \prod_{i=2}^{N} \varphi_{\Phi}\left(\frac{\lambda \cdot L \cdot \bar{L}}{N}; \alpha_{J + (i -2)K}\right)~.
\end{equation*}
This results in the final bound for the probability as
\begin{multline*}
    \bbP\left(\widehat{F}(f; \calS_{N}) - \bbE[\widehat{F}(f; \calS_{N}) > t\right) \\\leq \inf_{\lambda ~:~ \lambda < \frac{\Lambda N}{L}} \varphi_{\Phi}\left(\frac{\lambda \cdot L}{N}; \beta\right)^{N - 1} \cdot \varphi_{\Phi}\left(\frac{\lambda \cdot L}{N}; \alpha_{J}\right) \cdot \prod_{i=2}^{N} \varphi_{\Phi}\left(\frac{\lambda \cdot L \cdot \bar{L}}{N}; \alpha_{J + (i -2)K}\right) \exp\left(-\lambda t\right)~.
\end{multline*}

We instantiate this with the choices of \(\varphi_{\Phi}\) and \(\Lambda\) highlighted in \cref{assump:mgf}.
\begin{description}
    \item [\(\Phi(t) = t\log(t)\): ] the function of \(\lambda\) in the infimum turns to
    \begin{equation*}
        \frakm(\lambda) := \exp\left(\frac{\beta \lambda^{2}L^{2}(N - 1)}{2N^{2}} + \frac{\alpha_{J}\lambda^{2}L^{2}}{2N^{2}} + \sum_{i=2}^{N} \frac{\lambda^{2}L^{2}\bar{L}^{2}\alpha_{J + (i - 2)K}}{2N^{2}} - \lambda t\right)~.
    \end{equation*}
    Taking the derivative w.r.t. \(\lambda\) and setting it to \(0\) results in
    \begin{equation*}
        \lambda \left(\beta (N - 1) + \alpha_{J} + \sum_{i=2}^{N} \bar{L}^{2} \alpha_{J + (i - 2)K}\right) = t \cdot \frac{N^{2}}{L^{2}}
    \end{equation*}
    This results in the infimum
    \begin{equation*}
        \exp\left(-\frac{N^{2}t^{2}}{2L^{2}(\beta(N - 1) + \alpha_{J} + \sum_{i=2}^{N} \bar{L}^{2}\alpha_{J + (i - 2)K})}\right)~.
    \end{equation*}
    To better understand the dependency on \(N\), we further bound this quantity from above.
    We specifically consider the denominator, and bound the partial series in the definition of \(\alpha_{J}, \alpha_{J + (i - 2)K}\) with the value of the infinite series (which exists on account of \(\bar{L} < 1\)).
    \begin{align*}
        \beta(N - 1) &+ \alpha_{J} + \sum_{i=2}^{N} \bar{L}^{2}\alpha_{J + (i - 2)K} \\
        &= \beta(N - 1) + \beta\sum_{i=0}^{J -1}\bar{L}^{2i} + \alpha_{0}\bar{L}^{2J} + \sum_{i=2}^{N} \bar{L}^{2}\left\{\beta \sum_{i'=0}^{J + (i - 2)K - 1} \bar{L}^{2i'} + \alpha_{0}\bar{L}^{2J + 2(i - 2)K}\right\} \\
        &\leq \beta(N - 1) + \frac{\beta}{1 - \bar{L}^{2}} + \alpha_{0} \bar{L}^{2J} + \bar{L}^{2}\sum_{i=2}^{N}\left\{\frac{\beta}{1 - \bar{L}^{2}} + \alpha_{0}\bar{L}^{2J} \cdot \bar{L}^{2(i - 2)K}\right\} \\ 
        &\leq \frac{\beta(N - 1)}{1 - \bar{L}^{2}} + \frac{\beta}{1 - \bar{L}^{2}} + \alpha_{0}\bar{L}^{2J}\left(1 + \frac{\bar{L}^{2}}{1 - \bar{L}^{2K}}\right) \\
        &= \frac{\beta \cdot N}{1 - \bar{L}^{2}} + \alpha_{0}\bar{L}^{2J}\left(1 + \frac{\bar{L}^{2}}{1 - \bar{L}^{2K}} \right) =: \frakQ(\alpha_{0}, \beta, \bar{L}, K, J, N)~.
    \end{align*}
    In summary, the infimum is bounded by \(\exp\left(-\frac{N^{2}t^{2}}{2L^{2}\frakQ(\alpha_{0}, \beta, \bar{L}, K, J, N)}\right)\).
\end{description}

\end{proof}

\subsubsection{Proof of \Cref{lem:tv-bounded,lem:kl-mgf,lem:renyi-moment}}
\label{sec:prf:bias-bounds}

\begin{proof}
We give the proofs of these lemmas collectively.
\paragraph{Proof of \Cref{lem:tv-bounded}} This lemma is a direct consequence of the dual representation of the total variation distance which states that for any two distributions \(\rho_{1}, \rho_{2}\),
\begin{equation*}
    \mathsf{TV}(\rho_{1}, \rho_{2}) = \sup_{g} \left\{ \bbE_{\rho_{1}}[g] - \bbE_{\rho_{2}}[g] ~:~ |g(x)| \leq 1 ~\forall~ x \in \bbR^{d}\right\}~.
\end{equation*}
Therefore,
\begin{equation*}
    \left|F^{\star}(f; \rho^{\star}) - F^{\star}(f; \pi^{\star})\right| \leq B \cdot \mathsf{TV}(\rho^{\star}, \pi^{\star})~.
\end{equation*}

For the other two lemmas, we can alternatively write
\begin{align*}
    F^{\star}(f; \rho^{\star}) - F^{\star}(f; \pi^{\star}) &= \bbE_{\rho^{\star}}[f] - \bbE_{\pi^{\star}}[f] \\
    &= \bbE_{\pi^{\star}}\left[\left(\frac{\rmd \rho^{\star}}{\rmd \pi^{\star}} - 1\right) \cdot f\right]~\numberthis\label{eq:bias-relative-density}
\end{align*}
\paragraph{Proof of \Cref{lem:kl-mgf}}

We use the fact that \(\bbE_{\pi^{\star}}\left[\frac{\rmd \rho^{\star}}{\rmd \pi^{\star}}\right] = 1\), and hence
\begin{equation*}
    \bbE_{\pi^{\star}}\left[\left(\frac{\rmd \rho^{\star}}{\rmd \pi^{\star}} - 1\right) \cdot f\right] = \bbE_{\pi^{\star}}\left[\frac{\rmd \rho^{\star}}{\rmd \pi^{\star}} \cdot (f - \bbE_{\pi^{\star}}[f])\right]~.
\end{equation*}
Now, we use \Cref{prop:product-phi-entropy} with \(\Phi(t) = t \log(t)\), \(f_{2} \leftarrow \frac{1}{\lambda} \cdot \frac{\rmd \rho^{\star}}{\rmd \pi^{\star}}\), and \(f_{1} \leftarrow e^{\lambda \cdot (f - \bbE_{\pi^{\star}}[f])}\) for an arbitrary \(\lambda > 0\).
Since \(\Phi'(t) = 1 + \log t\),
\begin{align*}
    \bbE_{\pi^{\star}}\left[(f_{1} - \bbE_{\pi^{\star}}[f_{1}]) \cdot f_{2}\right] &\leq \ent_{\pi^{\star}}\left[\frac{1}{\lambda} \cdot \frac{\rmd \rho^{\star}}{\rmd \pi^{\star}}\right] + \frac{1}{\lambda} \cdot \log \bbE_{\pi^{\star}}\left[\exp\left(\lambda \cdot (f - \bbE_{\pi^{\star}}[f])\right)\right] \\
    &= \frac{1}{\lambda} \cdot \ent_{\pi^{\star}}\frac{\rmd \rho^{\star}}{\rmd \pi^{\star}} + \lambda \cdot \frac{1}{\lambda^{2}} \cdot \log \bbE_{\pi^{\star}}\left[\exp\left(\lambda \cdot (f - \bbE_{\pi^{\star}}[f]\right)\right] \\
    &\leq \frac{1}{\lambda} \mathsf{KL}(\rho^{\star} \| \pi^{\star}) + \lambda^{p - 1} \cdot \sup_{\lambda > 0} \frac{1}{\lambda^{p}} \log \bbE_{\pi^{\star}}\left[\exp\left(\lambda \cdot (f - \bbE_{\pi^{\star}}[f]\right)\right]~.
\end{align*}
To obtain the tightest bound, we minimise the RHS w.r.t \(\lambda > 0\).
\begin{equation*}
    F^{\star}(f; \rho^{\star}) - F^{\star}(f; \pi^{\star}) \leq \left(\mathsf{KL}(\rho^{\star} \| \pi^{\star})\right)^{\frac{p - 1}{p}} \cdot \left(\sup_{\lambda > 0} \frac{1}{\lambda^{p}} \log \bbE_{\pi^{\star}}\left[\exp(\lambda \cdot (f - \bbE_{\pi^{\star}}[f]))\right] \right)^{\frac{1}{p}} \cdot \frac{p}{(p - 1)^{\frac{p - 1}{p}}}~.
\end{equation*}
To obtain a bound \(F^{\star}(f; \pi^{\star}) - F^{\star}(f; \rho^{\star})\), take \(f_{1} \leftarrow e^{\lambda \cdot (\bbE_{\pi^{\star}}[f] - f}\) and repeating the same arguments completes the proof.

\paragraph{Proof of \Cref{lem:renyi-moment}}

From \cref{eq:bias-relative-density}, we apply H\"{o}lder's inequality as
\begin{align*}
    \left|F^{\star}(f; \rho^{\star}) - F^{\star}(f; \pi^{\star})\right| &\leq \bbE_{\pi^{\star}}\left[\left|\frac{\rmd \rho^{\star}}{\rmd \pi^{\star}} - 1\right| \cdot |f|\right] \\
    &\leq \bbE_{\pi^{\star}}\left[\left|\frac{\rmd \rho^{\star}}{\rmd \pi^{\star}} - 1\right|^{q}\right]^{\frac{1}{q}} \cdot \bbE_{\pi^{\star}}[|f|^{\frac{q}{q - 1}}]^{\frac{q - 1}{q}}~.
\end{align*}
From \cref{prop:algebraic-identity}, we have
\begin{equation*}
    \bbE_{\pi^{\star}}\left[\left|\frac{\rmd \rho^{\star}}{\rmd \pi^{\star}} - 1\right|^{q}\right] \leq \bbE_{\pi^{\star}}\left[\left(\frac{\rmd \rho^{\star}}{\rmd \pi^{\star}}\right)^{q}\right] - 1 = \exp\left((q - 1) \sfD_{q}(\rho^{\star} \| \pi^{\star})\right) - 1~.
\end{equation*}
Substituting this in the bound for the bias completes the proof.
\end{proof}

\section*{Acknowledgements}

We would like to thank Andre Wibisono for his invaluable comments towards helping improve this draft, and several technical discussions about the results in this work.
We also would like to thank Siddharth Mitra, Ashia Wilson, and Sam Power for helpful remarks.
This work was supported by a Simons Foundation Collaboration on Theory of Algorithmic Fairness Grant, and Generali.

\bibliography{refs.bib}
\bibliographystyle{plainnat}

\appendix
\section{Definitions and facts excluded from the main text}
\label{app:sec:defs}

\begin{definition}[Lipschitz continuity]
A map \(\varphi : \bbR^{d} \to \bbR^{p}\) is said to be \emph{\(L\)-Lipschitz continuous} if for any two \(x, y \in \bbR^{d}\),
\begin{equation*}
    \|\varphi(x) - \varphi(y)\| \leq L \cdot \|x - y\|~.
\end{equation*}
\end{definition}

\begin{definition}[Sub-Gaussianity]
A random variable \(Z \in \bbR^{d}\) distributed according to \(\pi\) is said to be \emph{sub-Gaussian with variance proxy \(\sigma^{2}\)} if for any \(\lambda \in \bbR\) and unit vector \(u \in \bbR^{d}\)
\begin{equation*}
    \bbE_{\pi}[\exp\left\{\lambda \cdot \langle u, Z\rangle - \bbE_{\pi}[\lambda \cdot \langle u, Z\rangle]\right\}] \leq \exp\left(\frac{\lambda^{2} \cdot \sigma^{2}}{2}\right)~.
\end{equation*}
\end{definition}

\begin{proposition}
\label{prop:ibp}
Let \(F \in \cssf{\bbR^{d}}\) and \(G : \bbR^{d} \to \bbR\) be differentiable functions.
Then,
\begin{equation*}
    \int \nabla F(x) G(x) \rmd x = -\int \nabla G(x) F(x) \rmd x~.
\end{equation*}
\end{proposition}
\begin{proof}
    For any \(u \in \bbR^{d}\), consider
    \begin{align*}
        \left\langle u, \int \nabla F(x) G(x) \rmd x\right\rangle &= \int \langle u, \nabla F(x)\rangle G(x) \rmd x \\
        &= \int \langle G(x) u, \nabla F(x)\rangle \rmd x \\
        &\overset{(a)}= -\int F(x) \nabla \cdot (G(x)u)\rmd x\\
        &\overset{(b)}= -\int F(x) (\nabla \cdot (u) G(x) + \langle u, \nabla G(x)\rangle) \rmd x \\
        &= -\int F(x) \langle u, \nabla G(x)\rangle \rmd x \\
        &= -\left\langle u, \int F(x) \nabla G(x) \rmd x\right\rangle~.
    \end{align*}
    In step \((a)\), we use integration-by-parts and the fact that \(F \in \cssf{\bbR^{d}}\), which means that the boundary terms vanish, and in step \((b)\), we use the chain rule for the divergence.
\end{proof}

\begin{proposition}
\label{prop:youngs-square}
Let \(a, b \in \bbR\).
Then, for any \(\sfC > 0\),
\begin{equation*}
    (a + b)^{2} \leq (1 + \sfC) \cdot a^{2} + (1 + \sfC^{-1}) \cdot b^{2}
\end{equation*}
\end{proposition}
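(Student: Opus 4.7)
The plan is to prove this standard form of Young's inequality by expanding the square and bounding the cross term via an appropriate non-negativity statement. Specifically, I would begin with the identity $(a+b)^{2} = a^{2} + 2ab + b^{2}$ and then show that $2ab \leq \sfC \cdot a^{2} + \sfC^{-1} \cdot b^{2}$ for every $\sfC > 0$, from which the stated inequality follows by grouping terms.

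The cross-term bound itself is immediate from the non-negativity of a perfect square: the quantity $(\sqrt{\sfC} \cdot a - \sfC^{-1/2} \cdot b)^{2} = \sfC \cdot a^{2} - 2ab + \sfC^{-1} \cdot b^{2}$ is non-negative, which rearranges to $2ab \leq \sfC \cdot a^{2} + \sfC^{-1} \cdot b^{2}$. Substituting this into the expansion of $(a+b)^{2}$ and collecting the $a^{2}$ and $b^{2}$ coefficients yields $(a+b)^{2} \leq (1 + \sfC) \cdot a^{2} + (1 + \sfC^{-1}) \cdot b^{2}$, as desired.

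Since the argument is a two-line manipulation that only relies on expanding a square and the non-negativity of another square, there is no real obstacle to overcome; the only thing worth noting is that the bound holds with equality precisely when $\sqrt{\sfC} \cdot a = \sfC^{-1/2} \cdot b$, i.e.\ when $b = \sfC \cdot a$, which confirms that the stated constants are sharp for the given template. No ancillary assumption on the signs of $a, b$ is needed because the cross-term bound is derived from a square and therefore applies verbatim to all real $a, b$.
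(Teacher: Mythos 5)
Your proof is correct and follows essentially the same route as the paper: expand $(a+b)^{2}$ and bound the cross term $2ab$ by $\sfC \cdot a^{2} + \sfC^{-1} \cdot b^{2}$ via Young's inequality for products, which you justify inline by the non-negativity of $(\sqrt{\sfC}\cdot a - \sfC^{-1/2}\cdot b)^{2}$. The only cosmetic difference is that the paper passes through $2ab \leq 2|a||b|$ before invoking AM--GM, whereas you apply the square-completion argument directly to $2ab$; both are valid for all real $a, b$.
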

\begin{proof}
By expanding the square
\begin{equation*}
    (a + b)^{2} = a^{2} + b^{2} + 2ab \leq a^{2} + b^{2} + 2|a||b| \leq a^{2} + \sfC \cdot a^{2} + b^{2} + \sfC^{-1} \cdot b^{2}~,
\end{equation*}
where the final step applies Young's inequality for products (or AM-GM inequality).
\end{proof}

\begin{proposition}
\label{prop:f-divergence-coupling}
Let \(d_{\phi}\) be the \(\phi\)-divergence for a convex function \(\phi\), defined between two distributions \(\mu_{1}, \mu_{2} \in \mathcal{P}_{\mathrm{ac}}(\bbR^{d})\) as
\begin{equation*}
    d_{\phi}(\mu_{1} \| \mu_{2}) := \bbE_{\sfx \sim \mu_{2}}\left[\phi\left(\frac{\rmd \mu_{1}}{\rmd \mu_{2}}(\sfx)\right)\right]~.
\end{equation*}
If \(\mu_{1}, \mu_{2}\) are mixtures \emph{(\cref{eq:mixture-density})} of \(\{P_{y}\}_{y \in \bbR^{d}}\) generated by mixing distributions \(\rho_{1}, \rho_{2} \in \calP_{\mathrm{ac}}(\bbR^{d})\) respectively, then for any coupling \(\gamma\) of \(\rho_{1}, \rho_{2}\) we have
\begin{equation*}
    d_{\phi}(\mu_{1} \| \mu_{2}) \leq \bbE_{(\sfy_{1}, \sfy_{2}) \sim \gamma}\left[d_{\phi}\left(P_{X | Y = \sfy_{1}} \| P_{X | Y = \sfy_{2}}\right)\right]~.
\end{equation*}
\end{proposition}
\begin{proof}
For any coupling \(\gamma\) of \(\rho_{1}, \rho_{2}\), we have \(\mu_{i} = \bbE_{\sfy_{i} \sim \rho_{i}}\left[P_{X | Y = \sfy_{i}}\right] = \bbE_{(\sfy_{1}, \sfy_{2}) \sim \gamma}\left[P_{X | Y = \sfy_{i}} \right]\) for \(i \in \{1, 2\}\).
Since \(\phi\) is convex, \(d_{\phi}(\cdot \| \cdot)\) is jointly convex in its arguments, which leads to
\begin{equation*}
    d_{\phi}(\mu_{1} \| \mu_{2}) \leq \bbE_{(\sfy_{1}, \sfy_{2}) \sim \gamma}\left[d_{\phi}(P_{X | Y = \sfy_{1}} \| P_{X | Y = \sfy_{2}})\right]~.
\end{equation*}
\end{proof}

\begin{proposition}
\label{prop:algebraic-identity}
Let \(t > 0\) and \(q \geq 2\).
Then
\begin{equation*}
    |t - 1|^{q} \leq t^{q} - 1 - q(t - 1)~.
\end{equation*}
\end{proposition}
\begin{proof}
    Note that when \(q = 2\), this is precisely expanding \((t - 1 + 1)^{2}\).
    There are two cases: when \(t \geq 1\) and when \(t \in [0, 1)\).
    A key fact we will use is the superadditivity of convex functions over non-negative reals that take the value \(0\) at \(0\).
    \begin{itemize}
        \item When \(t \geq 1\), consider the function \(g(t) := t^{q} - (t - 1)^{q} - 1 - q(t - 1)\).
        Its derivative is \(qt^{q - 1} - q(t - 1)^{q - 1} - q\).
        Since \(s \mapsto s^{q - 1}\) over \([0, \infty)\) is convex and maps \(0\) to \(0\), we have superadditivity: \(t^{q - 1} = (t - 1 + 1)^{q - 1} \geq (t - 1)^{q- 1} + 1\), which implies that \(g(t)\) is increasing.
        Since \(g(1) = 0\), we have the required inequality for \(t \geq 1\).</li>
    
        \item When \(t \in [0, 1)\), consider the function \(g(t) := (1 - t)^{q} - t^{q} + 1 + q(t - 1)\).
        Its derivative is \(-q(1 - t)^{q - 1} - qt^{q - 1} + q\).
        Again, by superadditivity \(1 = (1 - t + t)^{q - 1} \geq (1 - t)^{q - 1} + t^{q - 1}\).
        Therefore, \(g(t)\) is a decreasing function, and \(g(1) = 0\), which implies the required statement for \([0, 1)\).
    \end{itemize}
\end{proof}

\end{document}